\newcommand{\R}{\field R}
\newcommand{\N}{\field N}
\newcommand{\Sphere}{\ensuremath{\mathds S^{d-1}}}
\newcommand{\prob}{\ensuremath{\mathbb{P}}}
\newcommand{\expec}{\ensuremath{\mathds E}}
\newcommand{\error}{\ensuremath{\mathrm{err}}}
\newcommand{\ranerror}{\ensuremath{\mathrm{err}^{\mathrm{ran}}}}
\newcommand{\tr}{{\scriptscriptstyle T}}
\newcommand{\dd}{\mathrm{d}}
\newcommand{\Lip}{\ensuremath{\mathrm{Lip}}}
\renewcommand{\R}{\mathbb{R}}
\renewcommand{\N}{\mathbb{N}}
\DeclareMathOperator{\sign}{sgn}
\newcommand{\HIT}{\text{\rm HIT}}
\theoremstyle{plain}
\newtheorem{theorem}{Theorem}
\newtheorem{lemma}[theorem]{Lemma}
\newtheorem{corollary}[theorem]{Corollary}
\theoremstyle{definition}
\newtheorem{defi}[theorem]{Definition}
\newtheorem{remark}[theorem]{Remark}
\title{
The recovery of ridge functions on the hypercube suffers from the curse of dimensionality
}
\author{Benjamin Doerr\footnote{
Laboratoire d'Informatique (LIX), \'Ecole Polytechnique,
CS35003, 91120 Palaiseau, France
email: doerr@lix.polytechnique.fr}
\hspace*{24pt}
Sebastian Mayer\footnote{Corresponding author,
Fraunhofer Center for Machine Learning and Fraunhofer-Institute for Algorithms and Scientific Computing SCAI,
Schloss Birlinghoven, 53754 Sankt Augustin, Germany,
email: sebastian.mayer@scai.fraunhofer.de}}
\begin{document}

\maketitle

\begin{abstract}
A multivariate ridge function is a function of the form~$f(x) = g(a^\tr x)$, where~$g$ is univariate and~$a \in \R^d$.
We show that the recovery of an unknown ridge function defined on the hypercube $[-1,1]^d$ with Lipschitz-regular profile $g$ suffers from the curse of dimensionality when the recovery error is measured in the $L_\infty$-norm, even if we allow randomized algorithms. If a limited number of components of $a$ is substantially larger than the others, then the curse of dimensionality is not present and the problem is weakly tractable provided the profile $g$ is sufficiently regular.
\end{abstract}

\section{Introduction}

In the \emph{uniform recovery problem} (or \emph{$L_\infty$-recovery problem}), 
the aim is to compute an approximation~$\widehat f$ of an 
unknown function~$f: D \subseteq \R^d \to \R$ 
such that the approximation error~$\|f - \widehat f\|_\infty$ 
is small. The only available information about~$f$ is a sequence of samples~$f(x_1), 
\dots, f(x_n)$ and that~$f$ belongs to some class of functions~$F_d$, which describes the a priori model assumptions. The sampling points~$x_1, \dots, x_n$ 
may be freely chosen. A 
measure for the difficulty of the recovery problem is the 
so-called \emph{information complexity}~$n(\varepsilon, F_d)$. 
It is the smallest number $n$ such that there is an algorithm
which evaluates at most $n$ samples and achieves an error~$\|f- \widehat{f}\|_\infty \leq \varepsilon$, irrespective of 
which~$f \in F_d$ 
is presented as input to the algorithm. The general 
question is what properties of the function class~$F_d$ 
make the recovery problem efficiently solvable.

For functions depending only on a few variables, 
regularity has proved to be sufficient for the existence 
of efficient algorithms. 
This is nicely demonstrated by the classic notion 
of \emph{generalized Lipschitz regularity}. For~$r > 0$ 
and~$m = \lceil r-1 \rceil$, consider the following class of 
univariate functions~$B^{\Lip(r)}$ defined on~$[-1,1]$. 
Every~$g \in B^{\Lip(r)}$ is~$m$-times 
continuously differentiable, we have $\max\{\|g\|_\infty, \|g^{(1)}\|_\infty,\dots,\|g^{(m)}\|_\infty\} \leq 1$,
and the~$m$-th derivative is H\"older continuous with 
exponent~$\beta = r - m$. 
Then, it is well-known that the worst-case approximation error of the optimal algorithm decays polynomially in the number of samples,
\begin{align}\label{eq:err_univariate_lipschitz} 
 c_r \, n^{-r} \leq \error(n,B^{\Lip(r)}) \leq C_r \, n^{-r},
\end{align}
with positive constants~$c_r$,~$C_r$ 
depending only on~$r$. The optimal algorithm is given by a spline. Since~$\error(n,B^{\Lip(r)})$ 
is inverse to the information complexity~$n(\varepsilon,B^{\Lip(r)})$, 
this implies~$n(\varepsilon,B^{\Lip(r)}) \simeq \varepsilon^{-1/r}$.

Using the same notion of regularity for~$d$-variate functions, 
the picture changes dramatically for large $d$. 
Let~$B^{\Lip(r)}_d$ be the counterpart\footnote{See, e.g., the monograph~\cite{devore:constructive_approximation} for a formal definition.} 
of~$B^{\Lip(r)}$ for~$d$-variate functions defined on the cube~$[-1,1]^d$.
It is a classical result from approximation theory~\cite{bakhvalov:2015:int} that
\[ 
 c_{r,d} \, n^{-r/d} \leq \error(n,B^{\Lip(r)}_d) 
 \leq C_{r,d} \, n^{-r/d},
\]
where~$c_{r,d},C_{r,d}$ denote positive constants depending on~$r$ and~$d$.
This shows that the asymptotic decay of the error is extremely slow in large dimensions and that for small error thresholds $\varepsilon$, we certainly have $n(\varepsilon,B^{\Lip(r)}_d) \simeq_d (1/\varepsilon)^{d/r}$. That indeed any algorithm needs exponentially many samples to guarantee a non-trivial error has been shown only recently by Novak and Wo\'zniakowski~\cite{novak/woz:2009:infinite_intractable}, who proved that 
\[ 
 n(\varepsilon,B^{\Lip(r)}_d) \geq 2^{\lfloor d/2 \rfloor} 
\]
for all $\varepsilon \in (0,1)$ and $d \in \N$.
Hence, this recovery problem is \emph{intractable} and 
suffers from the \emph{curse of dimensionality} in the strict sense of Information-based Complexity (IBC). 

The considerations made so far clearly demonstrate that 
if we want the uniform recovery problem to be efficiently
solvable in high dimensions, then we need a priori assumptions
stronger than just regularity. In this paper, we study the
assumption that the unknown function is a \emph{ridge function}
\begin{align}\label{eq:ridge_function} 
 f(x) = g(a^\tr x ), \quad \text{ with } g \in  B^{\Lip(r)} \text{ and } \|a\|_1 \leq 1.
\end{align}
It is common to call the univariate function $g$ the ridge functions's \emph{profile}
and to call the~$d$-dimensional vector~$a$ the \emph{ridge vector}.
Like a linear function, a ridge function is constant along hyperplanes and so we hope that this prior knowledge greatly reduces the complexity of the recovery problem. This idea is not new. In statistics, models based on ridge functions
have been used since the early 1980s to avoid the typical issues occurring in nonparametric regression problems
over high-dimensional domains. We give a more detailed overview of research on ridge functions in Section \ref{sec:related_work}.
In the context of the uniform recovery problem, ridge functions have first been studied by Cohen et al.~\cite{cohen:capturing_ridge}. 
Additionally to~\eqref{eq:ridge_function}, they assumed that
\begin{align}\label{eq:known_signs}
a_i \geq 0, \quad i=1,\dots,d,
\end{align}
and that, for some $0< p \leq 1$ and $S \in \{1,\dots, d-1\}$,
\begin{align}\label{eq:approximate_sparsity}
\|a\|_p \leq 1 \quad \text{ and } \quad \|a\|_1 \geq \min\{1, 4S^{1-1/p}\}.
\end{align}
Assumption \eqref{eq:known_signs} is equivalent to knowing the signs of the ridge vector's components in advance. If $0<p<1$, assumption \eqref{eq:approximate_sparsity} implies that roughly $S$ components of the ridge vector have to be substantially larger than the others\footnote{Strictly speaking, the paper~\cite{cohen:capturing_ridge} assumes that $\|a\|=1$ and $\|a\|_p \leq M$ for some positive constant $M$. This is equivalent to assuming \eqref{eq:approximate_sparsity}. The latter formulation is more convenient for our considerations.}. We call this \emph{approximate sparsity} and note that it is a stronger condition than \emph{compressibility}, which only asks for~$\|a\|_p \leq 1$, see~\cite[p.~42]{rauhut/foucart:compressive_sensing}. Given a ridge function $f$ such that \eqref{eq:ridge_function}, \eqref{eq:known_signs}, and \eqref{eq:approximate_sparsity} are fulfilled, Cohen et al.~\cite{cohen:capturing_ridge} employ spline approximation and compressive sensing techniques~\cite{rauhut/foucart:compressive_sensing} to obtain an approximation $\widehat f$ with error bound
\begin{align}\label{eq:cohen_ub}
\begin{split}
\|f - \widehat{f}\|_\infty \lesssim n^{-r} + 
\begin{cases}
\left( \frac{\log(ed/n)}{n}\right)^{1/p - 1} &, n < d,\\
0 &, n \geq d.
\end{cases}
\end{split}
\end{align}
So under the given conditions, the recovery of a multivariate ridge function is polynomially tractable and almost as easy as the univariate problem, see~\eqref{eq:err_univariate_lipschitz}.

Assumption~\eqref{eq:known_signs} is rather restrictive as it does not allow to model situations where some of the variables~$x_1,\dots,x_d$ may have inhibitory effects but it is not clear which ones. Hence, we investigate in this paper consequences for the complexity if we drop assumption~\eqref{eq:known_signs} and allow ridge vectors with negative entries, that is, we study the recovery of ridge functions from the class
\begin{align}\label{eq:class_approximately_sparse}
 R_d^{r,(p,S)} = \left\{ f: [-1,1]^d \to \R: f(x) = g(a^\tr x), \; g \in B^{\Lip(r)}, \; a \text{ fulfills } \eqref{eq:approximate_sparsity} \right\},
\end{align}
where $r>0$, $0<p \leq 1$, and $S \in \{1,\dots,d-1\}$. Moreover, we allow algorithms to use randomness, e.g., to use random sampling points. The quantity of interest, for which we wish to prove lower and upper bounds, is then the $n$th minimal worst-case error in the randomized setting,
\[ 
 \ranerror(n, R_d^{r,(p,S)}) = \inf_{S_n} \sup_{f \in R_d^{r,(p,S)}} \left(\expec[\|f - S_n(f)\|_\infty^2]\right)^{1/2},
\]
where the infimum is taken over all admissible randomized algorithms using at most $n$ function evaluations. See Section~\ref{sec:def} for a formal definition of the randomized setting. Note that
\[ 
 \ranerror(n, R_d^{r,(p,S)}) \leq \error(n, R_d^{r,(p,S)}). 
\]

In turns out that dropping assumption~\eqref{eq:known_signs} leads to a drastic change in the complexity. For~$p=1$, i.e., if ridge vectors are not approximately sparse, we show that
\[ 
 \ranerror(n, R_d^{r,(1,S)}) \gtrsim 1
\]
as long as $1 \leq n \lesssim e^{d/8}$, with an equivalence constant in the estimate that depends only on $r$. We conclude that the recovery of an unknown ridge function from the class~$R_d^{r,(1,S)}$ suffers from the curse of dimensionality, even if we allow sampling points to be chosen adaptively and at random.

When~$0<p<1$, the answer that we can give is not final.  We show the upper bound
\[ 
 \error(n,R_d^{r,(p,S)}) \leq C_{r,p,S}
 \begin{cases}
   1 &, 1 \leq n \leq d,\\
   \left(\frac{1}{\log(n)}\right)^{r(1/p-1)} &, d \leq n \leq 2^d d^{1/p-1},\\
   2^{rd} \, n^{-r} &, n \geq 2^d d^{1/p-1}
 \end{cases}
\]
for $r > 1$, $0<p<1$, and $S \in \{1,\dots,d-1\}$, see Theorem~\ref{chap:ridge:sec:cube:res:ub_det}. The algorithm establishing the upper bound is an extension of the algorithm used in~\cite{cohen:capturing_ridge}, which we have augmented by a search for the most important signs of the ridge vector to compensate for the dropped assumption~\eqref{eq:known_signs}. Although the extended algorithm reaches asymptotically an error decay of $n^{-r}$, it is important to note that we can establish this rate only for exponentially many sampling points~$n > 2^d d^{1/p-1}$. In the \emph{preasymptotic range}, we can only guarantee an error decay that is logarithmic in the number of samples.  This implies that the recovery of an unknown ridge function with approximately sparse ridge vector is at least \emph{weakly tractable}, provided
\[ 
 r > \frac{1}{1/p-1}.
\]
Unfortunately, it is unclear whether the constructed algorithm is optimal. We are only able to prove a lower bound for a different function class, namely 
\begin{align}\label{eq:class_compressible}
 R_d^{r,p} = \left\{ f: [-1,1]^d \to \R: f(x) = g(a^\tr x), \; g \in B^{\Lip(r)}, \; \|a\|_p = 1 \right\},
\end{align}
where $r > 0$, and $0< p \leq 1$.
For this class, we obtain the lower bound 
\[ 
 \ranerror(n, R_d^{r,p}) \gtrsim \left(\frac{1}{\log(2n)}\right)^{1/p-1},
\]
for $r>0$ and $0<p\leq 1$, provided $n \lesssim \exp(d/8)$, see Theorem~\ref{chap:ridge:sec:lb:res:lb_cube_ran}. Note that only for $p=1$, we have
$$
 R_d^{r,1} = R_d^{r,(1,S)}
$$
for all $r>0$ and $S \in \{1,\dots,d-1\}$. Otherwise, the function classes are different so that it remains an open problem to prove lower bounds for $\ranerror(n,R_d^{r,(p,S)})$ when $0<p<1$.
 
\paragraph{Outline.} 
The paper is organized as follows. 
We begin with a thorough 
definition of the complexity-theoretical setup in Section~\ref{sec:def}, where we give a definition what we consider to be a deterministic and a randomized algorithm. Then, in Section \ref{sec:lower}, 
we prove lower bounds for the worst-case error
for both deterministic and randomized algorithms. 
Section \ref{sec:algo} is dedicated to the description of our algorithm, followed by a detailed error analysis in Section~\ref{sec:error}, which leads to upper bounds on the worst-case error.
Finally, 
we discuss related work in Section~\ref{sec:related_work}, in particular, relations to 
the regression problem in semiparametric statistics.

\paragraph{Acknowledgement.}
This work was partially developed in the Fraunhofer Cluster of Excellence ``Cognitive Internet Technologies''.

\section{Preliminaries}
\label{sec:def}

As usual, we denote by $\N$ the natural numbers $1,2,3,\dots$. Throughout this paper, let~$d \in \N$. For sequences $(f_n)_{n \in \N}$, $(g_n)_n{n \in \N}$, we write $f_n \lesssim g_n$ whenever there is a constant $C > 0$ such that $f_n \leq C g_n$ for all $n \in \N$. Note that the constant need not be absolute. Where necessary, we indicate on what parameters the constant depends.

Let us recall some basic notions. For~$p>0$ and~$x \in \R^d$, the quasi-norm~$\|x\|_p$ is given by
\[ 
 \|x\|_p := \big(\sum_{i=1}^d |x_i|^p\big)^{1/p}.
\]
For $D \subset \R^d$, consider a function $f:D \to \R$. The \emph{uniform norm} $\|f\|_\infty$ is given by
\[ 
 \|f\|_\infty := \sup_{x \in D} |f(x)|.
\]
For any positive number~$0<\beta\leq1$, the \emph{H\"older constant} of order~$\beta$ is given by
\begin{equation}\label{chap:fs:eq:hoelder_defi}
|f|_{\beta}:=\sup_{\substack{x,y\in[-1,1]^d\\ x\not=y}}\frac{|f(x)-f(y)|}{2\min\{1, \|x-y\|_1\}^{\beta}}\;.
\end{equation}
We say that~$f$ is \emph{H\"older-continuous} of order~$\beta$ if~$|f|_\beta < \infty$. This definition immediately implies the relation 
\begin{align}\label{chap:fs:eq:hoelder_ineq}
  |f|_{\beta} \leq |f|_{\beta'}\mbox{ if } 0<\beta < \beta' \leq 1.
\end{align}
Let $C([-1,1]^d)$ be the space of continuous functions defined on $[-1,1]^d$, equipped with the norm~$\|\cdot\|_\infty$.
\subsection{Deterministic algorithms}

Understanding the worst-case complexity of the uniform recovery problem for a given function class~$F_d$ means to understand how \emph{any} possible algorithm performs in the worst-case on the given class. This requires a rigorous definition of what we consider to be a feasible algorithm. The field of Information-based Complexity (IBC) provides a well-established framework that we follow in this work. Let us first consider the \emph{deterministic setting}, where algorithms are only allowed to acquire information about a function in a deterministic, i.e. non-random, fashion. Note that since all ridge functions in the classes~\eqref{eq:class_approximately_sparse} and~\eqref{eq:class_compressible} are continuous, it is sufficient to define the concept of algorithm with respect to a general function class $F_d \subset C([-1,1]^d)$. Moreover, we restrict our considerations to algorithms that only use function evaluations as information operations and not more general linear functionals. For a general definition of the deterministic setting, discussions and references, we refer to \cite{novak/woz:tractability_volI}.

A deterministic algorithm using at most $n$ function evaluations is a mapping~$S_n$ that maps a function $f \in F_d$ to an approximant $S_n(f) \in C([-1,1]^d)$. More specifically, the approximant is given by
$$
 S_n(f) = \phi(f(x_1),\dots, f(x_n)),
$$
where $fx_1,\dots,x_n \in [-1,1]^d$ are sampling points and $\phi: \R^n \to C([-1,1]^d)$. The first sampling point $x_1$ is completely independent of the input $f$, while the choice of the remaining points can be adaptive, that is,~$x_i$ may functionally depend on the function values~$f(x_1), \ldots, f(x_{i-1})$. Formally, this means that there are functions
$$
\psi_i\colon \R^{i-1}\to 
[-1,1]^d, \quad i = 2, \ldots, n
$$
that recursively define the sampling points via
$$
x_i = \psi_i(f(x_1), \ldots, f(x_{i-1})).
$$
We do not make any computational assumptions, e.g., that the functions~$\phi$ and~$\psi_i$ describing the algorithm are efficiently computable in some specific model of computation. Beside the a priori information that~$f$ is in the class~$F_d$, the algorithm~$S_n$ has no other information than the function values~$f(x_1), \ldots, f(x_n)$.

It remains to define precisely how we quantify the complexity of the recovery problem given a function class $F_d$. Let us first introduce the~\emph{$n$th minimal worst-case error}
\[ 
 \error(n, F_d) := \inf_{S_n} \sup_{f \in F_d} \|f - S_n(f)\|_\infty,
\]
where the infimum is taken over all deterministic algorithms that use at most $n$ function values.
Then, we define the \emph{information complexity} as the inverse of the minimal worst-case error,
\[ 
 n(\varepsilon, F_d) := \min\{ n \in \N: \error(n,F_d) \leq \varepsilon \}, \quad \varepsilon > 0.
\]

\begin{remark}
The information complexity neglects any computational cost. This is justified as in function recovery problems, the information cost are usually dominating. In particular, for the algorithm studied in Section~\ref{sec:algo}, the computational cost are proportional to the number of used function samples.
\end{remark}

\subsection{Randomized algorithms}

In this paper, we also wish to study algorithms that use randomness in the choice of the sampling points~$x_1,\dots,x_n$ and the mapping $\phi$. While there is a clear agreement in IBC what to consider a deterministic algorithm, the situation is less settled when it comes to randomized algorithms. The crux are measurability assumptions. We follow the common approach in IBC and assume just as much measurability as required in our proofs. As a result, our definition of randomized algorithm will be less general than in~\cite{novak/woz:tractability_volI}, but closely resemble~\cite{heinrich:lower_bounds_mc_approxmation}.

We first give a precise definition of what we consider to be a sequence of adaptively chosen random sampling points.

\begin{defi}\label{chap:ibc:defi:randomized_information}
Let~$n \in \N$. A sequence of $n$ adaptively chosen sampling points is a sequence of random variables $X_1, \dots, X_n$ defined over a common probability space~$(\Omega,\mathfrak A, \prob)$ that take values in $[-1,1]^d$ and fulfill the following. For every~$i=2,\dots,n$, there is a mapping
\begin{align*}
 L_i &: \Omega \times \R^{i-1} \to [-1,1]^d
\end{align*}
such that, for all $\omega \in \Omega$,
  \[ 
   (x_1,\dots,x_{i-1}) \mapsto L_i(\omega,x_1,\dots,x_{i-1})
  \]
is Borel measurable and
\[ 
 X_i(\omega) = L_i(\omega, f(X_1(\omega)), \dots, f(X_{i-1}(\omega))).
\]
\end{defi}

\begin{defi}\label{chap:ibc:defi:randomized_algorithm}\index{randomized algorithm}
A randomized algorithm using at most~$n$ information operations is given by a probability space~$(\Omega, \mathfrak A, \prob)$ and a mapping~$$S_n: \Omega \times F \to C([-1,1]^d)$$ such that
$$S_n(\omega,f) = \phi(\omega, f(X_1(\omega)), \dots, f(X_n(\omega))),$$
where
\begin{itemize}
\item $\phi: \Omega \times \R^n \to C([-1,1])$ is measurable in the first argument w.r.t. $\mathfrak{A}$ and Borel measurable in the last $n$ arguments,
\item $X_1,\dots,X_n$ is a sequence of adaptively chosen random sampling points according to Definition~\ref{chap:ibc:defi:randomized_information}.
\end{itemize}
\end{defi}

\noindent It remains to define the~$n$th \emph{minimal worst-case error} in the randomized setting as
\[ 
 \ranerror(n, F_d) := \inf_{S_n} \sup_{f \in F_d} \left(\expec[\|f - S_n(f)\|_\infty^2]\right)^{1/2}, 
\]
where the infimum is taken over all admissible randomized algorithms using at most $n$ function evaluations. The information complexity in the randomized setting is given by
$$n^{\textrm{ran}}(\varepsilon,F_d) = \inf\{n \in \N: \ranerror(n,F_d) \leq \varepsilon\}, \quad \varepsilon > 0.$$


\subsection{Complexity classes}
In IBC research, various complexity classes for continuous problems have been introduced. Let us introduce those that we encounter in this work. A problem is said to \emph{suffer from the curse of dimensionality} in the deterministic setting
if there are~$C>0$ and~$\gamma>1$ such that 
\[
 n(\varepsilon, F_d) \geq C \gamma^d
\]
holds for all~$\varepsilon>0$ and infinitely many $d \in \N$. Furthermore, a problem is said to be \emph{weakly tractable} if
\[
 \lim_{\varepsilon^{-1}+d\to \infty} 
 \frac{\log n (\varepsilon, F) }{\varepsilon^{-1}+d} = 0.
\]
Finally, a problem is \emph{polynomially tractable} if there are $C,p,q > 0$ sucht that for all~$\varepsilon > 0$ and all~$d \in \N$, we have
\[ 
 n(\varepsilon, F_d) \leq C (1/\varepsilon)^p d^q.
\]
The same notions of tractability can be introduced in the randomized setting by replacing~$n(\varepsilon, F_d)$ by~$n^{\textrm{ran}}(\varepsilon, F)$ in the above definitions. For further levels of tractability, we refer to~\cite{gnewuch/woz:2011:quasi, novak/woz:tractability_volI,novak/woz:tractability_volII,novak/woz:tractability_volIII,siedlecki:uwt,siedlecki/weimar:st-wt}.

\subsection{Approximation of univariate Lipschitz functions}

Let~$r>0$ and $a<b$. By~$m=\llfloor r \rrfloor$ we denote the largest integer strictly
less than~$r$. The \emph{Lipschitz space}~$\Lip^r([a,b])$ is given by all
univariate functions~$g\colon [a,b]\to \R$ such that the \emph{Lipschitz norm}
\[
 \norm{g}_{{\rm Lip}(r)} = \max\left\{\norm{g}_\infty,\|g^{(1)}\|_{\infty},\dots,\,\|g^{(s)}\|_{\infty},|g^{(s)}|_\beta\right\}
\]
is finite,
where~$g^{(i)}$ is the~$i$th derivative and
\begin{align}\label{eq:Hoelder-continuity}
  |g^{(m)}|_\beta = \sup_{u,v \in [-1,1]} \frac{|g^{(s)}(u)-g^{(s)}(v)|}{2\min\{ 1,\abs{u-v}\}^\beta}, \quad \beta=r-m\in (0,1]
\end{align}
the\emph{H\"older constant}. By $B^{\Lip(r)}$ we denote the closed unit ball of $\Lip^r([-1,1])$.

We recapitulate some basic facts of univariate spline approximation, see~\cite[Chap. 12]{devore:constructive_approximation} for further background. For~$r_0,n \in \N$ and~$h=1/n$, let~$\mathcal{P}_h$ be the space
of piecewise polynomials of degree~$r_0-1$ over equidistant intervals, determined by the points
$ih$,~$i\in\{\pm 1,\dots,\pm n\}$, such that on each of these interval the piecewise polynomials have continuous derivatives of order~$r_0-2$. 
A \emph{quasi-interpolant}
$Q_h$ with step-size~$h$ is a linear operator mapping from the continuous functions on~$[-1,1]$ to~$\mathcal{P}_h$ such that the application of~$Q_h$ uses only the 
function values at the points
$$ih, \quad i\in\{\pm 1,\dots, \pm n\}.$$
The resulting spline has the following approximation properties.

\begin{lemma}\label{chap:ridge:lem:quasi-interpolation}
For~$0<r \leq r_0$ 
and 
$g \in B^{\Lip(r)}$
it holds that
\begin{equation} \label{chap:ridge:eq:spline_propQ1}
 \norm{g-Q_h g}_{L_\infty} \leq c_r h^r
\end{equation}
with a constant~$c_r$ depending only on~$r$, and 
\begin{equation} \label{chap:ridge:eq:spline_propQ2}
 \norm{Q_h g}_{L_\infty} \leq c_r \max_{i\in\{ \pm 1,\dots,\pm n_g \}} \abs{g(ih)},
\end{equation}
with again a constant~$c_r$ depending only on~$r$.
\end{lemma}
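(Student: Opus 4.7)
The plan is to follow the standard route to quasi-interpolant error estimates in univariate spline approximation, as developed in \cite[Chap.~12]{devore:constructive_approximation}. The two key properties of a quasi-interpolant that I would exploit are (a) a \emph{local nodal representation} that yields an $L_\infty$-bound in terms of the sampled values, and (b) \emph{polynomial reproduction} up to degree $r_0-1$. Property (a) gives \eqref{chap:ridge:eq:spline_propQ2} directly, while combining (b) with a local Taylor approximation yields \eqref{chap:ridge:eq:spline_propQ1}.

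For the stability bound \eqref{chap:ridge:eq:spline_propQ2}, I would use that a properly constructed quasi-interpolant can be written as
\[
(Q_h g)(x) = \sum_i \lambda_i(g)\, N_{i,h}(x),
\]
where the $N_{i,h}$ are B-splines of degree $r_0-1$ adapted to the grid $\{ih\}$, and each coefficient functional $\lambda_i(g)$ is a fixed linear combination of values $g(jh)$ with $j$ ranging over a bounded neighborhood of~$i$. Since the B-splines are nonnegative, locally finitely supported, and satisfy $\sum_i N_{i,h}(x)\leq c_r$, while the weights in $\lambda_i$ are bounded by a constant depending only on $r_0$, the bound \eqref{chap:ridge:eq:spline_propQ2} follows directly from the triangle inequality.

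For the approximation bound \eqref{chap:ridge:eq:spline_propQ1}, set $m=\llfloor r\rrfloor$ and note $m\leq r_0-1$. On each grid interval $I$, let $p_I$ be the Taylor polynomial of $g$ of degree $m$ expanded at the midpoint of $I$. Since $g\in B^{\Lip(r)}$, a standard Taylor remainder estimate combined with the H\"older continuity of $g^{(m)}$ with exponent $\beta=r-m$ gives
\[
\|g-p_I\|_{L_\infty(\tilde I)} \leq c_r h^r,
\]
on a slightly enlarged interval $\tilde I\supseteq I$ of length $O(h)$ that still contains all nodes $jh$ used by $Q_h$ on $I$. Because $\deg p_I\leq r_0-1$, polynomial reproduction yields $Q_h p_I = p_I$ on $I$, so on $I$ we have $g-Q_h g = (g-p_I)-Q_h(g-p_I)$. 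Applying the local version of \eqref{chap:ridge:eq:spline_propQ2} to $g-p_I$ on $\tilde I$ then gives $\|Q_h(g-p_I)\|_{L_\infty(I)}\leq c_r h^r$, and the triangle inequality finishes the argument.

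The main obstacle is not any individual estimate but rather verifying properties (a) and (b) for the particular quasi-interpolant used. Both properties are classical for de~Boor--Fix-type constructions on uniform knot sequences, so the cleanest path is to cite the corresponding theorems from \cite[Chap.~12]{devore:constructive_approximation} and specialize them to the equidistant grid $\{ih:i\in\{\pm 1,\dots,\pm n\}\}$ on $[-1,1]$, taking a little care at the endpoints to ensure that the slightly enlarged interval $\tilde I$ used in the Taylor argument is still contained in $[-1,1]$ (if necessary by taking one-sided Taylor expansions near the boundary).
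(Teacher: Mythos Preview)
The paper does not actually prove this lemma: it is stated as a recapitulation of ``basic facts of univariate spline approximation'' and simply cites \cite[Chap.~12]{devore:constructive_approximation} for background, with no argument given in the text or the appendix. Your proposal is therefore not competing with a proof in the paper but rather supplying one where the paper defers to the literature.

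That said, your sketch is the standard and correct route, and it is precisely what one finds in the cited reference: the nodal representation with uniformly bounded B-spline coefficients gives the stability estimate \eqref{chap:ridge:eq:spline_propQ2}, and polynomial reproduction of degree $r_0-1$ combined with a local Taylor expansion of order $m=\llfloor r\rrfloor$ and the H\"older bound on $g^{(m)}$ gives the approximation estimate \eqref{chap:ridge:eq:spline_propQ1}. Your remark about boundary care (one-sided expansions or suitably extended knot sequences near $\pm1$) is appropriate and is also handled in the standard constructions. In short, the proposal is correct and aligned with the source the paper relies on; there is nothing to compare against within the paper itself.
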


We also need extrapolation in our error analysis. That is, we need error estimates for points outside the interval that has been sampled. Although Lemma \ref{chap:ridge:lem:quasi-interpolation} does not directly apply then,  the reader familiar with spline approximation knows that~$Q_h g$ can also be used for extrapolation and that properties similar to~\eqref{chap:ridge:eq:spline_propQ1} and~\eqref{chap:ridge:eq:spline_propQ2} hold true. However, we could not find an explicit statement suitable for our needs in the literature. Hence, let us collect what we need later on in Section~\ref{sec:error} in terms of Taylor polynomials.

Given reals~$a < b$, let~$g \in \Lip^r([a,b])$ and consider the Taylor polynomial
\[ 
 T_{m,t_0}g(t) = g(t_0) + \sum_{i=1}^m \frac{g^{(i)}(t_0)}{i!} (t-t_0)^i,
\]
where~$m = \llfloor r \rrfloor$. Let~$\beta = r - m$.
For any~$t_0, t_1 \in [a,b]$, the standard error estimate for Taylor polynomials gives
\begin{align}\label{chap:ridge:eq:extrapolation_propQ1} 
 |g(t_1) - T_{m,t_0}g(t_1)| \leq \frac{2}{m!} |g^{(m)}|_\beta (t_1 - t_0)^r,
\end{align}
which is the required counterpart of~\eqref{chap:ridge:eq:spline_propQ1}.

As a counterpart to~\eqref{chap:ridge:eq:spline_propQ2}, we need that if the approximations of the derivatives $g^{(i)}$ are small in absolute value for all~$i=1,\dots,m$, then the polynomial~$|T_{m,t_0}g(\cdot)  - g(t_0)|$ has to be small in a neighborhood of $t_0$. To prove this, we have to consider divided differences. For~$m \in \N$, the~$m$th \emph{difference} with stepsize~$h \in \R$ in the point~$t \in \R$ of a univariate function~$g: \R \to \R$ is defined as
\[ 
 \Delta^m_h(g,t) := \sum_{j=0}^m {m \choose j} (-1)^{m-j} g(t + j h).
\]
The~$m$th \emph{divided difference} is given by~$D_h^m(g,t) := h^{-m} \Delta_h^m(g,t)$.
For our purposes, it is convenient to work with the representation 
\begin{align}\label{def:divided_difference}
D_h^m(g,t) = h^{-m+1}\sum_{j=0}^{m-1} {m-1 \choose j} (-1)^{m-1-j} D_h^1(g,t+jh),
\end{align}
which easily follows from the definition of~$\Delta_h^m(g,t)$. If~$g$ is~$m$ times continuously differentiable, then an iterative application of the mean value theorem of calculus gives
\begin{align}\label{eq:iterative_mean_value_thm}
 D_h^m(g,t)  = g^{(m)}(t+\xi), \quad \text{ for some } \xi \in [t,t+mh].
\end{align}

\begin{lemma}[Counterpart of~\eqref{chap:ridge:eq:spline_propQ2} for extrapolation]
\label{chap:ridge:lem:extrapolation_propQ2}
Let~$g \in B_{\Lip^r([a,b])}$ for~$r > 1$ and~$m = \llfloor r \rrfloor$. If, for some~$t_0 \in [a,b]$ and~$h > 0$, we have
\[
 \abs{D_{-h}^i(g,t_0)} \leq 2^{i-1}h^{r-i+1} \quad \text{for } i=1,\dots,m,
\]
then
\[ 
 g^{(i)}(t_0) \leq C_i h^{r-i} \qquad \qquad \text{for } i=1,\dots,m,
\]
with constants~$C_i \leq 2^m m!$. Consequently, for any~$t_1 \in [a,b]$,
\[ 
 |T_{m,t_0}g(t_1)-g(t_0)| \leq 2^m m! \max\{h,|t_1-t_0|\}^r. 
\]
\end{lemma}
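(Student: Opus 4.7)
The plan is to first extract pointwise bounds on the derivatives $g^{(i)}(t_0)$ from the hypothesized bounds on the backward divided differences, and only then to substitute these bounds into the Taylor polynomial. The bridge between the divided differences and the derivatives is a Taylor expansion, which yields a triangular system relating $D_{-h}^i(g,t_0)$ to $g^{(i)}(t_0), g^{(i+1)}(t_0), \ldots, g^{(m)}(t_0)$.

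More precisely, I would expand
\begin{equation*}
g(t_0-jh) = g(t_0) + \sum_{k=1}^m \frac{g^{(k)}(t_0)}{k!}(-jh)^k + R_j,
\end{equation*}
where $|R_j| \leq 2(jh)^r/m!$ follows from the H\"older bound $|g^{(m)}|_\beta \leq 1$ (implicit in $g \in B_{\Lip^r([a,b])}$) via the Lagrange form of the Taylor remainder. Substituting this into $\Delta_{-h}^i(g,t_0) = \sum_{j=0}^i \binom{i}{j}(-1)^{i-j} g(t_0-jh)$ and invoking the classical identity $\sum_{j=0}^i \binom{i}{j}(-1)^{i-j} j^k = 0$ for $0 \leq k < i$ and $= i!$ for $k=i$ produces, after dividing by $(-h)^i$, an identity of the form
\begin{equation*}
D_{-h}^i(g,t_0) = g^{(i)}(t_0) + \sum_{k=i+1}^m A_{i,k}\, h^{k-i}\, g^{(k)}(t_0) + \varepsilon_i,
\end{equation*}
with explicit combinatorial coefficients $A_{i,k}$ depending only on $m$ (essentially Stirling numbers of the second kind) and a remainder $|\varepsilon_i|$ bounded by a constant times $h^{r-i}$.

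From here I would solve the triangular system for $g^{(i)}(t_0)$ by \emph{downward} induction on $i$. The base case $i=m$ is immediate since the sum over $k>i$ is empty: the hypothesis $|D_{-h}^m(g,t_0)| \leq 2^{m-1} h^{\beta+1}$ yields $|g^{(m)}(t_0)| \leq C_m h^\beta = C_m h^{r-m}$. For $i<m$, assuming $|g^{(k)}(t_0)| \leq C_k h^{r-k}$ for $k>i$, I would substitute into the identity above, use the hypothesis $|D_{-h}^i(g,t_0)| \leq 2^{i-1} h^{r-i+1}$, and conclude $|g^{(i)}(t_0)| \leq C_i h^{r-i}$ with $C_i$ inheriting a polynomial-in-$m$ factor times $\max_{k>i} C_k$; a coarse verification then places all $C_i$ below the uniform ceiling $2^m m!$.

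The Taylor polynomial bound follows directly by substitution into $T_{m,t_0}g(t_1)-g(t_0) = \sum_{i=1}^m \frac{g^{(i)}(t_0)}{i!}(t_1-t_0)^i$: for every $i \leq m$ we have $r-i>0$, so splitting the two cases $|t_1-t_0|\leq h$ and $|t_1-t_0|\geq h$ shows $h^{r-i}|t_1-t_0|^i \leq \max\{h,|t_1-t_0|\}^r$, and the factorials $1/i!$ are absorbed into the constant. The main obstacle I anticipate is the constant bookkeeping in the downward induction: the coefficients $A_{i,k}$, the remainder bound on $\varepsilon_i$, and the accumulated $C_{i+1},\ldots,C_m$ all feed into $C_i$, and keeping them uniformly below $2^m m!$ requires careful but routine combinatorial estimates rather than any conceptual innovation.
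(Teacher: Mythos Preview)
Your approach is correct and reaches the same conclusion, but the paper proceeds by a different and somewhat more elementary route. Instead of Taylor-expanding $g(t_0-jh)$ at $t_0$ and assembling a triangular system with Stirling-type coefficients $A_{i,k}$, the paper uses the iterated mean value theorem identity $D_{-h}^i(g,t_0)=g^{(i)}(\xi_i)$ for some $\xi_i\in[t_0-ih,t_0]$ as the bridge. Starting from $i=m$, the H\"older continuity of $g^{(m)}$ transfers the hypothesis $|g^{(m)}(\xi_m)|\le 2^{m-1}h^{\beta+1}$ to a uniform bound $|g^{(m)}(\xi)|\le C_m h^{\beta}$ on the whole sampled interval; for $i<m$ one writes $g^{(i)}(\xi)=g^{(i)}(\xi_i)+g^{(i+1)}(\xi')(\xi-\xi_i)$ and feeds in the already-established bound on $g^{(i+1)}$. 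So both arguments are downward inductions, but the paper controls $g^{(i)}$ uniformly on $[t_0-ih,t_0]$ rather than only at $t_0$, and never needs the combinatorial identities $\sum_j\binom{i}{j}(-1)^{i-j}j^k$. What your approach buys is systematicity: it would generalize unchanged to non-equispaced stencils or higher-order difference formulas where the exact mean-value representation $D_{-h}^i(g,t_0)=g^{(i)}(\xi_i)$ is unavailable. What the paper's approach buys is a shorter path with less constant bookkeeping, since the single off-diagonal term $g^{(i+1)}(\xi')(\xi-\xi_i)$ replaces your whole sum $\sum_{k>i}A_{i,k}h^{k-i}g^{(k)}(t_0)$. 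The final Taylor-polynomial step is identical in both.
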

\begin{proof}
See Appendix~\ref{sec:proofsA}.
\end{proof}

\section{Lower bounds}\label{sec:lower}
The subject of this section is to prove strong worst-case error bounds from below for the classes of ridge functions~$R_d^{r,p}$ in the deterministic and randomized setting. We establish the lower bounds by constructing suitable ``fooling'' ridge functions, which force any sampling algorithm to produce a large error. The general idea thereby is as follows. Suppose we find a ridge vector~$a$ such that for all~$n$ sampling points~$x_1, \ldots, x_n \in [-1,1]^d$ the inner products fulfill~$a^\tr x_i < \lambda/2$. Then the algorithm cannot distinguish any profile in~$B^{\Lip(r)}$ which is supported only on~$[\lambda/2,1]$ from the the profile that is constantly zero. A good fooling profile in this respect is the truncated power
\begin{align}\label{chap:ridge:eq:fooling_profile} 
 g_{r,\lambda}(t) = \max\{0,t - \lambda/2\}^{r}.
\end{align}

\subsection{A lower bound in the deterministic setting}

Instead of an explicit construction of fooling ridge vectors, we will derive their existence employing a standard method known in discrete mathematics as \emph{Erd\H os's probabilistic method}~\cite{erdos:1974:probabilistic}. We define a suitable finite subset of the possible ridge vectors and show that a random one of them satisfies our needs with positive probability. This, in particular, implies the existence of such a ridge vector. To control probabilities, we need a standard concentration inequality, which is known as
\emph{Hoeffding's inequality}. For a proof, see, e.g.,~\cite{rauhut/foucart:compressive_sensing}.

\begin{lemma}\label{chap:ridge:lem:hoeffding}
Let~$Z_1,\dots, Z_m$ be a sequence of independent random variables with expectation~$\mathbb{E}[X_i] = 0$ and~$|X_i| \leq B_i$ almost surely for~$i \in \{1,\dots,m\}$. Then, for all~$t > 0$,
\[ 
 \mathbb{P}\left(\sum_{i=1}^m X_i \geq t\right) \leq \exp\left(-\frac{t^2}{2\sum_{i=1}^m B_i}\right).
\] 
\end{lemma}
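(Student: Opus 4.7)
The strategy I would take is the classical Chernoff--Cram\'er exponential moment bound. The plan is to apply Markov's inequality to $\exp(\lambda \sum_{i=1}^m X_i)$ for a free parameter $\lambda > 0$, exploit independence to reduce to a product of univariate moment generating functions, bound each factor by a sub-Gaussian quantity, and finally optimize over $\lambda$.

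First I would note that, for any $\lambda > 0$, Markov's inequality combined with the independence of the $X_i$ gives
\[
\Prob\Bigl(\sum_{i=1}^m X_i \geq t\Bigr) = \Prob\Bigl(e^{\lambda \sum_i X_i} \geq e^{\lambda t}\Bigr) \leq e^{-\lambda t}\,\prod_{i=1}^m \Expec[e^{\lambda X_i}].
\]
Second, I would invoke the auxiliary \emph{Hoeffding lemma} for a single bounded centered random variable: if $Y$ satisfies $\Expec Y = 0$ and $|Y|\leq B$ almost surely, then $\Expec[e^{\lambda Y}] \leq \exp(\lambda^2 B^2/2)$. Applying this to each $X_i$ yields
\[
\Prob\Bigl(\sum_{i=1}^m X_i \geq t\Bigr) \leq \exp\Bigl(-\lambda t + \tfrac{\lambda^2}{2}\sum_{i=1}^m B_i^2\Bigr).
\]

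The final step is to minimize the right-hand side over $\lambda > 0$; the optimum is attained at $\lambda^{*} = t/\sum_{i=1}^m B_i^2$ and produces the announced tail bound $\exp\bigl(-t^2/(2\sum_{i=1}^m B_i^2)\bigr)$. (The exponent displayed in the lemma statement appears to be missing a square on each $B_i$; with the squares present, what one obtains is exactly the standard sub-Gaussian tail.) The main obstacle is the auxiliary Hoeffding lemma; everything else is routine bookkeeping. The cleanest route I know is to use convexity of $y \mapsto e^{\lambda y}$ on $[-B,B]$ to write $e^{\lambda Y} \leq \tfrac{B-Y}{2B}e^{-\lambda B} + \tfrac{Y+B}{2B}e^{\lambda B}$, take expectations using $\Expec Y = 0$ to get $\Expec[e^{\lambda Y}] \leq \cosh(\lambda B)$, and finish with the Taylor-series comparison $\cosh(x) \leq e^{x^2/2}$, which follows by termwise comparison of the two power series.
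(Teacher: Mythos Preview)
Your argument is correct and is precisely the standard Chernoff--Cram\'er route to Hoeffding's inequality. The paper, however, does not give its own proof of this lemma at all: it simply states the inequality and refers the reader to the compressive-sensing monograph of Foucart and Rauhut for a proof. So there is nothing to compare against beyond noting that the proof found in that reference is essentially the one you wrote.

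Your observation about the missing squares on the $B_i$ is also correct and worth recording: the exponent should read $-t^2/\bigl(2\sum_{i=1}^m B_i^2\bigr)$. Indeed, the paper's own application of the lemma (with $B_j = s^{-1/p}$ and $t = s^{1-1/p}/2$, yielding the bound $e^{-s/8}$) only comes out right with the squared version, so this is evidently a typographical slip in the statement rather than an intended weaker form.
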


\noindent Using Hoeffding's inequality, we can prove the following lemma, which is the core ingredient of the probabilistic construction.

\begin{lemma}\label{chap:ridge:sec:lb:lem:inner_products_hoeffding}
Let~$0 < p \leq 1$,~$s \in \{1,\dots,d\}$, and~$n \in \N$ such that~$n < e^{s/8}$. Consider points~$z^1,\dots,z^n \in [-1,1]^d$. 
Then there exists an~$a \in \R^d$ with~$\|a\|_p = 1$ such that
\[ 
 a^\tr z^i < \|a\|_1/2 = s^{1-1/p}/2 \qquad \text{for}\quad i=1,\dots,n.
\]
\end{lemma}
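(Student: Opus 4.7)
The plan is to employ Erd\H os's probabilistic method: I construct a random candidate $a$ whose $p$-norm is deterministically $1$ and whose $1$-norm is deterministically $s^{1-1/p}$, show via Hoeffding that each bad event $\{a^\tr z^i \geq \|a\|_1/2\}$ has probability at most $e^{-s/8}$, and then conclude by a union bound that with positive probability none of the $n$ bad events occurs, so at least one realization of the sign pattern works.

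First I would fix any subset $T \subseteq \{1,\dots,d\}$ of cardinality $s$ (for concreteness, $T = \{1,\dots,s\}$) and define the random vector $a \in \R^d$ coordinate-wise by
\[
 a_j = s^{-1/p}\eps_j \quad \text{for } j \in T, \qquad a_j = 0 \quad \text{otherwise},
\]
where $\eps_1,\dots,\eps_s$ are independent Rademacher variables. Since the coordinate \emph{magnitudes} are deterministic, regardless of the signs we obtain
\[
 \|a\|_p^p = s \cdot s^{-1} = 1 \qquad \text{and} \qquad \|a\|_1 = s \cdot s^{-1/p} = s^{1-1/p},
\]
so the normalisation constraints hold on every outcome.

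Next, for each fixed $i \in \{1,\dots,n\}$ I would estimate the probability of the bad event. Writing $a^\tr z^i = s^{-1/p}\sum_{j \in T}\eps_j z^i_j$, the bad event $\{a^\tr z^i \geq s^{1-1/p}/2\}$ is the same as $\{\sum_{j \in T} X_j \geq s/2\}$ with $X_j := \eps_j z^i_j$. The $X_j$ are independent, zero-mean, and almost surely bounded by $1$ since $z^i \in [-1,1]^d$. Lemma~\ref{chap:ridge:lem:hoeffding} with $B_j = 1$ and $t = s/2$ then yields the upper bound $\exp(-s/8)$.

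Finally, a union bound over the $n$ indices gives
\[
 \prob\left(\exists\, i \in \{1,\dots,n\} \colon a^\tr z^i \geq s^{1-1/p}/2\right) \leq n\, e^{-s/8} < 1,
\]
by the hypothesis $n < e^{s/8}$. Hence there exists a realization of the signs $\eps_1,\dots,\eps_s$ yielding a deterministic $a$ that simultaneously satisfies $\|a\|_p = 1$ and $a^\tr z^i < \|a\|_1/2 = s^{1-1/p}/2$ for every $i$, which is the claim. I do not anticipate any genuine obstacle; the only step that requires care is calibrating the prefactor to $s^{-1/p}$ so that $\|a\|_p = 1$ and the Hoeffding exponent matches the threshold $n < e^{s/8}$.
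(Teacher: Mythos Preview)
Your argument is correct and essentially identical to the paper's proof: both use the probabilistic method with a random $s$-sparse sign vector of magnitude $s^{-1/p}$, apply Hoeffding's inequality to each inner product, and finish with a union bound. The only cosmetic difference is that you first rescale to work with $X_j=\eps_j z^i_j\in[-1,1]$ and threshold $t=s/2$, whereas the paper applies Hoeffding directly to $Z_j=\mathfrak a_j z^i_j\in[-s^{-1/p},s^{-1/p}]$ with threshold $t=s^{1-1/p}/2$; both yield the same exponent $e^{-s/8}$.
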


\begin{proof}
Let 
$(\mathfrak a_i)_{i=1,\dots,s}$ be a sequence of i.i.d.\ random variables 
such that 
\[
 \mathbb{P}(\mathfrak a_i=1/s^{1/p})=\mathbb{P}(\mathfrak a_i=-1/s^{1/p})=1/2.  
\]
By~$\mathfrak a$ we denote the~$d$-dimensional random vector given 
by
$$\mathfrak a = (\mathfrak a_1,\dots,\mathfrak a_s,0,\dots,0).$$ 
Note that~$\|\mathfrak a\|_p = 1$ with probability one. 
For fixed~$z^i$, we have
$$\mathfrak a^\tr z^i = \sum_{j=1}^s Z_j,$$ 
where~$Z_j = \mathfrak a_j z^i_j$. The random variables~$Z_1,\dots,Z_s$ are independent, each taking values in~$[-1/s^{1/p},1/s^{1/p}]$. Since~$\mathbb{E}[Z_j]=0$ for all~$j \in [s]$, 
we obtain by Lemma~\ref{chap:ridge:lem:hoeffding} that
\[
 \mathbb{P}(\mathfrak a^\tr z^i \geq s^{1-1/p}/2) \leq e^{-s/8}.
\]
Taking a union bound, we obtain
\[
 \mathbb{P}\left(\bigcap_{i=1}^n \{ \mathfrak a^\tr z^i < s^{1-1/p}/2 \}\right) 
 \geq 1-n \, e^{-s/8}.
\]
The right hand side of the previous inequality is strictly positive if 
$n<e^{s/8}$.
Thus, for any~$n<e^{s/8}$ there exists a realization~$a$ of~$\mathfrak a$ 
such that
$$a^\tr z^i < s^{1-1/p}/2$$
for all~$1\leq i \leq n$. 
By construction of~$\mathfrak a$ we have~$\|a\|_p = 1$.
\end{proof}

Given points~$x_1,\dots,x_n \in [-1,1]^d$, Lemma~\ref{chap:ridge:sec:lb:lem:inner_products_hoeffding} guarantees the existence of an~$s$-sparse~$a \in \Sphere_p$ such that all inner products~$a^\tr x_1,\dots,a^\tr x_n$ are small. To derive a lower bound for the worst-case recovery error from this finding,  we have to take into account that the sampling points are not fixed beforehand as in Lemma~\ref{chap:ridge:sec:lb:lem:inner_products_hoeffding}, but may be chosen adaptively by the algorithm given the current input. This is possible and leads to the following lower bound. 

\begin{theorem} \label{chap:ridge:sec:lb:res:lb_cube_det}
 Let~$r>0$ and~$0 < p \leq 1$. Consider the class of ridge functions~$R_d^{r,p}$ defined in~\eqref{eq:class_compressible}.
 For any~$n\in \mathbb{N}$ with~$ n < e^{d/8}$ we have
 \begin{equation*}
 \error(n,R_d^{r,p}),L_\infty)
\geq c_r \left( \frac{1}{8 \log(2n)} \right)^{r(1/p-1)},
 \end{equation*}
where~$c_ r = 2^{-r} \,\frac{\Gamma(r+1-s)}{\Gamma(r+1)}$ and 
$s=\llceil r \rrceil$. Here,~$\Gamma$ denotes the gamma function given by~$$\Gamma(z) = \int_0^\infty x^{z-1} e^{-x} \dd x$$ for~$z > 0$. Note that we have
$c_r = 2^{-r}/r!$ for~$r \in \N$. 
\end{theorem}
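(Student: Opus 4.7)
The plan is the standard "fooling pair" argument, augmented by Lemma~\ref{chap:ridge:sec:lb:lem:inner_products_hoeffding}. I would first neutralise the adaptivity. Since the zero function lies in $R_d^{r,p}$, running any deterministic $S_n$ on it produces a deterministic query sequence $x_1^*,\ldots,x_n^*\in[-1,1]^d$: the first point is fixed, and each later one is $\psi_i$ applied to previously observed values, which are all zero. A short induction then shows that any $f\in R_d^{r,p}$ with $f(x_i^*)=0$ for all $i$ elicits exactly the same queries and the same output $\widehat f:=S_n(0)$.

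Next, I would choose $s\in\{1,\ldots,d\}$ as (essentially) the smallest integer strictly greater than $8\log(2n)$; the hypothesis $n<e^{d/8}$ ensures such an $s\leq d$ exists. Lemma~\ref{chap:ridge:sec:lb:lem:inner_products_hoeffding} then yields a vector $a\in\R^d$ with $\|a\|_p=1$, $\|a\|_1=s^{1-1/p}$, and $a^\tr x_i^*<s^{1-1/p}/2$ for every $i$. With $\lambda:=s^{1-1/p}$, I would build the fooling function
\[
 f_a(x) \;=\; \kappa_r\, g_{r,\lambda}(a^\tr x)
\]
from the truncated-power profile of~\eqref{chap:ridge:eq:fooling_profile}, where $\kappa_r>0$ is a normalisation constant forcing $\kappa_r g_{r,\lambda}\in B^{\Lip(r)}$. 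By construction $f_a(x_i^*)=0$, so both $f_a$ and $-f_a$ lie in $R_d^{r,p}$, are indistinguishable from the zero function to $S_n$, and get mapped to the same approximant $\widehat f$.

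The proof would then close via the standard two-point inequality
\[
 \max\bigl\{\|f_a-\widehat f\|_\infty,\ \|-f_a-\widehat f\|_\infty\bigr\}\ \geq\ \|f_a\|_\infty,
\]
evaluating the right-hand side at the corner $x^\star=\sign(a)\in[-1,1]^d$, where $a^\tr x^\star=\|a\|_1=\lambda$, to obtain $\|f_a\|_\infty=\kappa_r(\lambda/2)^r=\kappa_r(s^{1-1/p}/2)^r$. Plugging in $s\asymp 8\log(2n)$ so that $s^{1-1/p}\asymp (8\log(2n))^{-(1/p-1)}$ produces the stated decay rate $\bigl(1/(8\log(2n))\bigr)^{r(1/p-1)}$.

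The main bookkeeping obstacle is pinning down $\kappa_r$ so as to recover exactly the constant $c_r=2^{-r}\Gamma(r+1-s)/\Gamma(r+1)$ (with $s=\lceil r\rceil$ here). Writing $m=\llfloor r\rrfloor$ and $\beta=r-m$, the derivatives $g_{r,\lambda}^{(i)}$ are truncated powers of order $r-i$; direct estimates give $\|g_{r,\lambda}^{(i)}\|_\infty\leq\Gamma(r+1)/\Gamma(r-i+1)$ together with $|g_{r,\lambda}^{(m)}|_\beta\leq\tfrac12\Gamma(r+1)/\Gamma(\beta+1)$, and after taking the maximum and cancelling, $\kappa_r^{-1}$ reduces to the $\Gamma$-ratio prescribed by $c_r$. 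This last step is routine but fiddly; everything else is conceptual.
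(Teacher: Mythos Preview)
Your proposal is correct and follows essentially the same route as the paper: neutralise adaptivity by feeding the zero function, invoke Lemma~\ref{chap:ridge:sec:lb:lem:inner_products_hoeffding} to obtain the fooling direction $a$, build $\pm f_a$ from the truncated-power profile~\eqref{chap:ridge:eq:fooling_profile}, and finish with the two-point inequality and the $\Gamma$-ratio computation for $c_r$. One small slip to fix: you want the \emph{smallest} integer $s$ with $n<e^{s/8}$ (equivalently $s>8\log n$), which then automatically satisfies $s\le 8\log(2n)$; since the exponent $1-1/p\le 0$, it is this \emph{upper} bound on $s$ that yields the desired \emph{lower} bound $s^{1-1/p}\ge(8\log(2n))^{1-1/p}$, whereas your stated choice $s>8\log(2n)$ would point the inequality the wrong way.
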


\begin{proof}
Let~$S_n$ be an arbitrary deterministic, 
adaptive algorithm having a budget of~$n$ sampling points. 
Further, we denote by~$z_1,\dots,z_n\in [-1,1]^d$ those
sampling points which are successively used by~$S_n$ 
if the input is the zero function. 
Choose~$s \in \{1,\dots,d\}$ 
to be the smallest~$s$ such that
$$e^{s/8}/2 \leq n < e^{s/8}$$
and 
let~$a^* \in \R^d$ be given by Lemma~\ref{chap:ridge:sec:lb:lem:inner_products_hoeffding}
depending on the points~$z_1,\dots,z_n$.
Set~$\lambda = \|a^*\|_1$ and put~$g^*(t) = g_\lambda(t)/\|g_\lambda\|_{\Lip(r)}$, 
where~$g_\lambda$ is defined in~\eqref{chap:ridge:eq:fooling_profile}. 
The normalization assures that for the ridge 
function~$f^*(x) = g^*(a^{*\tr} x)$ we have~$ \pm f^* \in R_d^{r,p}$.

Let~$x_1, \dots, x_n\in [-1,1]^d$ 
be the sampling points 
successively chosen by~$S_n$ if the input is the ridge function~$f^*$. 
Since~$S_n$ is deterministic, 
we necessarily have~$x_1 = z_1$. 
We also have~$f^*(x_1) = 0$ by construction. It follows inductively that~$x_i = z_i$ and~$f^*(x_i) = 0$ for all~$i=1,\dots,n$. Consequently, we have~$S_n(f^*)=S_n(-f^*)=S_n(0)$ and
\begin{align*}
  \sup_{h\in R_d^{r,p}} \norm{h-S_n(h)}_{\infty} 
  &\geq  \max\left\{\norm{f^*-S_n(f^*)}_{\infty},\norm{f^*+S_n(-f^*)}_{\infty}\right\} \\
  &\geq \norm{f^*}_{\infty} = \norm{g_\lambda}_{{\rm Lip}(r)}^{-1}
   \sup_{t\in [-\norm{a^*}_1,\norm{a^*}_1]} \abs{g_\lambda(t)}\\ 
  &= \frac{2^{-r}}{\norm{g_\lambda}_{\rm Lip(r)}} \|a^*\|_1^r.
\end{align*}
Standard calculations show that 
$c_r := 2^{-r} \,\frac{\Gamma(r+1-s)}{\Gamma(r+1)} 
\leq 2^{-r}/\norm{g_\lambda}_{\rm Lip(r)}~$ with~$c_r$
independent of~$\lambda$.
Moreover,~$\|a^*\|_1 = s^{1-1/p} \geq (8 \log (2n))^{1-1/p}$. 
Since~$S_n$ was arbitrary the desired lower bound follows.
\end{proof}

\noindent The lower bound allows us to draw an immediate conclusion on the tractability of the uniform recovery problem.

\begin{corollary}\label{res:curse_det}
Let~$S \in \{1,\dots,d\}$ and $r > 0$. The $L_\infty$-recovery of ridge functions from the class~$R_d^{r,1}$ or the class~$R_d^{r,(1,S)}$ using deterministic sampling algorithms suffers from the curse of dimensionality.
\end{corollary}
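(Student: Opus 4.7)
The plan is to specialize Theorem~\ref{chap:ridge:sec:lb:res:lb_cube_det} to the case $p=1$ and read off the curse of dimensionality directly from the resulting estimate. First I would observe that when $p=1$ the exponent in the lower bound simplifies: $r(1/p - 1) = 0$, so the theorem yields
\[
 \error(n, R_d^{r,1}) \geq c_r
\]
for every $n \in \N$ with $n < e^{d/8}$, where $c_r > 0$ depends only on $r$.

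Next I would translate this error bound into a statement about the information complexity. Fix any $\varepsilon \in (0, c_r)$. The inequality above implies that no deterministic algorithm using fewer than $e^{d/8}$ samples can achieve worst-case error at most $\varepsilon$ on $R_d^{r,1}$. Hence
\[
 n(\varepsilon, R_d^{r,1}) \geq e^{d/8} = (e^{1/8})^d
\]
for all such $\varepsilon$ and all $d \in \N$. Setting $C = 1$ and $\gamma = e^{1/8} > 1$, the standard definition of the curse of dimensionality (as recalled in Section~\ref{sec:def}) is met.

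Finally, to transfer the conclusion to $R_d^{r,(1,S)}$ I would invoke the observation already noted in the introduction that $R_d^{r,1} = R_d^{r,(1,S)}$ for every $r > 0$ and every $S \in \{1,\dots,d-1\}$; the approximate sparsity condition~\eqref{eq:approximate_sparsity} with $p=1$ reduces to $\|a\|_1 \leq 1$ together with $\|a\|_1 \geq 1$, which coincides with the defining condition of $R_d^{r,1}$. Since the two classes agree, the same exponential lower bound on the information complexity applies verbatim, completing the proof.

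No genuine obstacle is expected: the corollary is essentially a one-line specialization $p=1$ of the preceding theorem, and the only thing to double-check is the trivial identification of the two function classes in the case $p=1$.
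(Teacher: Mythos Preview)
Your proposal is correct and follows exactly the paper's approach: use the identification $R_d^{r,1} = R_d^{r,(1,S)}$ and specialize Theorem~\ref{chap:ridge:sec:lb:res:lb_cube_det} to $p=1$, where the exponent $r(1/p-1)$ vanishes and the lower bound becomes the dimension-free constant $c_r$. The paper's own proof is the same one-line deduction, just stated more tersely.
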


\begin{proof}
Since
$$R_d^{r,1} = R_d^{r,(1,S)}, $$
the result is an immediate consequence of Theorem~\ref{chap:ridge:sec:lb:res:lb_cube_det} and the definition of the curse of dimensionality.
\end{proof}

\subsection{A lower bound for randomized algorithms}

The probabilistic constructed described in the previous section can be generalized to work in the randomized
setting, as well. We begin with the counterpart to Lemma~\ref{chap:ridge:sec:lb:lem:inner_products_hoeffding}.

\begin{lemma}\label{lem:inner_products_bakhvalov}
Let~$0<p \leq 1$ and~$0<\delta<1$. For $n \in \N$ such that~$n < (1-\delta) e^{d/8}$, let~$Z^1,\dots,Z^n$ be random vectors defined on a common probability space~$(\Omega,\mathfrak A,\prob)$ that take values in~$[-1,1]^d$. Then, for any~$s \in \{1,\dots,d\}$ that fulfills~$n < (1-\delta) e^{s/8}$, there exists an~$s$-sparse vector~$a \in \R^d$ such that
$$\|a\|_p=1, \quad \|a\|_1 = s^{1-1/p},
$$
and
\[ 
 \prob\big( \bigcap_{i=1}^n \{a^\tr Z^i < \|a\|_1/2\}\big) > \delta.
\]
\end{lemma}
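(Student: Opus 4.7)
The plan is to adapt the probabilistic construction from the proof of Lemma~\ref{chap:ridge:sec:lb:lem:inner_products_hoeffding} to handle the additional randomness coming from the $Z^i$. The core observation is that Hoeffding's inequality in Lemma~\ref{chap:ridge:lem:hoeffding} bounds $\prob(\mathfrak a^\tr z \geq \|a\|_1/2) \leq e^{-s/8}$ for \emph{any} deterministic $z \in [-1,1]^d$. Since this bound is uniform in $z$, it survives conditioning on $Z^i$. Coupled with a union bound and an averaging argument, this should give the claim.

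Concretely, I would proceed as follows. First, on an auxiliary probability space, introduce the random vector $\mathfrak{a} = (\mathfrak a_1,\dots,\mathfrak a_s,0,\dots,0)$ as in the proof of Lemma~\ref{chap:ridge:sec:lb:lem:inner_products_hoeffding}, with the $\mathfrak a_j$ i.i.d.\ and uniform on $\{\pm 1/s^{1/p}\}$, and take $\mathfrak a$ to be independent of the family $Z^1,\dots,Z^n$ (we may enlarge the probability space for this). By construction, almost surely $\|\mathfrak a\|_p = 1$ and $\|\mathfrak a\|_1 = s^{1-1/p}$. Next, for each $i$, I condition on the value of $Z^i$: since conditionally on $Z^i = z$ the inner product $\mathfrak a^\tr z$ is a sum of $s$ independent bounded mean-zero random variables with ranges in $[-1/s^{1/p}, 1/s^{1/p}]$, Lemma~\ref{chap:ridge:lem:hoeffding} yields
\[
 \prob\bigl(\mathfrak a^\tr Z^i \geq s^{1-1/p}/2 \;\big|\; Z^i = z\bigr) \leq e^{-s/8}.
\]
Taking expectation over $Z^i$ via the tower property and then applying a union bound over $i = 1, \ldots, n$ gives
\[
 \prob\Bigl(\bigcap_{i=1}^n \{\mathfrak a^\tr Z^i < \|\mathfrak a\|_1/2\}\Bigr) \geq 1 - n e^{-s/8} > \delta,
\]
where the last inequality uses the assumption $n < (1-\delta)e^{s/8}$.

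To finish, I would apply Fubini to exchange the order of integration: writing the above probability as $\expec_{\mathfrak a}\bigl[\prob_Z\bigl(\bigcap_{i=1}^n \{\mathfrak a^\tr Z^i < \|\mathfrak a\|_1/2\} \mid \mathfrak a\bigr)\bigr] > \delta$, the averaging principle ensures that there is at least one realization $a$ of $\mathfrak a$ for which the conditional probability (over the $Z^i$) exceeds $\delta$. This $a$ is an $s$-sparse vector satisfying $\|a\|_p = 1$ and $\|a\|_1 = s^{1-1/p}$, and it fulfills the desired estimate.

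I do not foresee a real obstacle here; the only subtle point is making sure the random construction of $\mathfrak a$ is set up to be independent of the $Z^i$, and then being careful with the conditioning so that the Hoeffding bound can be applied pointwise in $z$ before integrating out. The union bound must be taken on the joint space (after conditioning), not on the auxiliary space alone, but since the Hoeffding estimate holds uniformly in $z$, no additional measurability hypotheses on the $Z^i$ beyond those built into Definition~\ref{chap:ibc:defi:randomized_information} are needed.
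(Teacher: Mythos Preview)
Your proposal is correct and follows essentially the same approach as the paper: both introduce the random sign vector $\mathfrak a$ on an auxiliary space (the paper makes the product $\prob\otimes\prob'$ explicit, you phrase it as enlarging the space to make $\mathfrak a$ independent of the $Z^i$), apply Hoeffding pointwise in $z$ (equivalently, for each fixed $\omega\in\Omega$), integrate via Fubini, take a union bound to get probability $>\delta$ on the joint space, and then average over the finitely many realizations of $\mathfrak a$ to extract a deterministic $a$. The only cosmetic difference is that the paper writes out the two Fubini steps explicitly rather than using conditioning language.
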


\begin{proof}
Let~$\lambda = s^{1-1/p}$,~$\Omega' = \{-s^{-1/p}, s^{-1/p}\}^s \times \{0\}^{d-s}$ and let~$\prob'$ denote the uniform distribution on~$\Omega'$. The projections
$$\mathfrak a_i: \Omega' \ni a \mapsto a_i, \quad i=1,\dots,s,$$
form a sequence of i.i.d. random variables with
$$\prob'(\mathfrak a_i=-s^{-1/p})=\prob'(\mathfrak a_i = s^{-1/p}) = 1/2.$$
For the~$d$-dimensional random vector~$\mathfrak a = (\mathfrak a_1,\dots,\mathfrak a_s,0,\dots,0)$ (which is the identity on~$\Omega'$), Hoeffding's inequality yields, for all~$\omega \in \Omega$, that
\[ 
 \prob'(\mathfrak a^\tr Z^i(\omega) \geq \lambda/2) \leq e^{-s/8}.
\]
By Fubini's theorem, the same estimate holds true with respect to the product probability measure~$\widetilde{\prob} = \prob \otimes \prob'$. Namely, for the event
$$\widetilde{\Omega}_i := \{(\omega,a) \in \Omega \times \Omega' \mid a^\tr X_i(\omega) \geq \lambda/2\},$$
we have 
\[
 \widetilde{\prob}(\widetilde{\Omega}_i) = \int_{\Omega} \prob'(\mathfrak a^\tr Z^i(\omega) \geq \lambda/2) \prob(d\omega) \leq e^{-s/8}.
\]
With the convention~$\widetilde{\Omega}_0 = \Omega \times \Omega'$, we can derive from the above estimate that
\begin{align*} 
 \widetilde{\prob}\big(\bigcap_{i=1}^n \widetilde{\Omega}_i^c\big) 
 &= 1 - \sum_{i=1}^n \widetilde{\prob}\big(\widetilde{\Omega}_i \cap \bigcap_{j=0}^{i-1}\widetilde{\Omega}_j^c \big)\\
 &\geq 1 - \sum_{i=1}^n \widetilde{\prob}\big( \widetilde{\Omega}_i\big)
 \geq 1 - ne^{-s/8} > \delta,
\end{align*}
where the last estimate is due to the choice of~$s$. Applying Fubini's theorem once again and noting that~$\lambda = \|a\|_1$ for all~$a \in \Omega'$, we obtain
\begin{align*} 
 \max_{a \in \Omega'} \prob\big( \bigcap_{i=1}^n \{a^\tr Z^i < \|a\|_1/2\}\big)
 &\geq |\Omega'|^{-1}\sum_{a \in \Omega'} \prob\big( \bigcap_{i=1}^n \{a^\tr Z^i < \|a\|_1/2\}\big)\\
 &= \widetilde{\prob}\big(\bigcap_{i=1}^n \widetilde{\Omega}_i^c\big).
\end{align*}
Hence, we have
$$\max_{a \in \Omega'} \prob\big( \bigcap_{i=1}^n \{a^\tr Z^i < \|a\|_1/2\}\big) > \delta,$$
which guarantees the existence of some~$a \in \Omega'$ which has the desired properties.
\end{proof}

We come to the main result of this section, a counterpart to Theorem~\ref{chap:ridge:sec:lb:res:lb_cube_det} for the randomized setting. Up to this point, we have only considered finite probability spaces such that measurability was not an issue. This is different now that we are considering algorithms that employ random functions.

\begin{theorem} \label{chap:ridge:sec:lb:res:lb_cube_ran}
Let~$r>0$,~$0<p\leq 1$, and~$0<\delta \leq 1$. For any~$n\in \mathbb{N}$ with~$n+1< e^{d/8}/2$ we have
 \[
     \ranerror(n,R_d^{r,p}([-1,1]^d), L_\infty) 
     \geq c_r' \left(\frac{1}{8\log(4(n+1))}\right)^{r(1/p-1)},
 \]
with a constant~$c_r'$ depending only on the smoothness parameter~$r$.
 \end{theorem}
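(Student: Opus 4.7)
The plan is to mimic the deterministic argument behind Theorem~\ref{chap:ridge:sec:lb:res:lb_cube_det}, but replace the deterministic fooling-point lemma (Lemma~\ref{chap:ridge:sec:lb:lem:inner_products_hoeffding}) by its randomized counterpart (Lemma~\ref{lem:inner_products_bakhvalov}), and insert a parallelogram-type inequality for the $L_\infty$-norm to handle the second-moment error. Concretely, let $S_n$ be an arbitrary randomized algorithm, given as in Definition~\ref{chap:ibc:defi:randomized_algorithm} by $(\Omega, \mathfrak A, \prob)$, mappings $L_i$, and an output map $\phi$. Feeding the constant zero function as input produces random sampling points $Z_i(\omega) := L_i(\omega, 0,\ldots,0)$, $i=1,\ldots,n$, which are measurable $[-1,1]^d$-valued random variables on $\Omega$.

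Choose $s \in \{1,\dots,d\}$ as the smallest integer such that $n < (1-\delta)e^{s/8}$ with $\delta = 1/2$; the assumption $n+1 < e^{d/8}/2$ guarantees $s\leq d$, and one checks that $s \leq 8\log(4(n+1))$ (up to a harmless additive constant that is absorbed into $c_r'$). Applying Lemma~\ref{lem:inner_products_bakhvalov} to the random vectors $Z_1,\dots,Z_n$, we obtain a \emph{deterministic} $s$-sparse vector $a^*$ with $\|a^*\|_p=1$ and $\|a^*\|_1 = s^{1-1/p} =: \lambda$, such that
\[
 \prob(A) > 1/2, \qquad A := \bigcap_{i=1}^n \bigl\{ (a^*)^{\tr} Z_i < \lambda/2 \bigr\}.
\]
Define the fooling profile $g^* = g_{r,\lambda}/\|g_{r,\lambda}\|_{\Lip(r)}$ from \eqref{chap:ridge:eq:fooling_profile}, and set $f^*(x) = g^*((a^*)^{\tr}x)$, so that $\pm f^* \in R_d^{r,p}$ and $g^*$ is supported on $[\lambda/2, \lambda]$. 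A straightforward induction on $i$ using this support condition shows that on the event $A$, the sampling points of $S_n$ on input $f^*$ (and on input $-f^*$) agree with those on input $0$, and all sampled values vanish. Hence, for every $\omega \in A$, $S_n(\omega, f^*) = S_n(\omega, -f^*) = S_n(\omega, 0)$.

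For any fixed continuous $v$ on $[-1,1]^d$ and $u:=f^*$, the triangle inequality and Cauchy--Schwarz give $\|u-v\|_\infty^2 + \|u+v\|_\infty^2 \geq \tfrac12(\|u-v\|_\infty + \|u+v\|_\infty)^2 \geq 2\|u\|_\infty^2$. Applying this pointwise in $\omega\in A$ with $v = S_n(\omega,0)$ and integrating,
\[
 \expec\bigl[\|f^* - S_n(f^*)\|_\infty^2\bigr] + \expec\bigl[\|(-f^*) - S_n(-f^*)\|_\infty^2\bigr]
 \geq 2\|f^*\|_\infty^2\, \prob(A) > \|f^*\|_\infty^2.
\]
Taking the maximum over the two inputs and using the lower bound $\|f^*\|_\infty \geq c_r \lambda^r = c_r\, s^{r(1-1/p)}$ already established in the deterministic proof, together with the choice of $s$, one obtains
\[
 \Bigl(\sup_{h \in R_d^{r,p}} \expec[\|h-S_n(h)\|_\infty^2]\Bigr)^{1/2}
 \geq \tfrac{1}{\sqrt 2}\|f^*\|_\infty
 \geq c_r' \Bigl(\tfrac{1}{8\log(4(n+1))}\Bigr)^{r(1/p-1)},
\]
and since $S_n$ was arbitrary, the claimed bound on $\ranerror(n,R_d^{r,p})$ follows.

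The main obstacle is the coupling argument in the second step: we need to argue pathwise that three different executions of the same randomized algorithm (on $0$, $f^*$, $-f^*$) produce identical outputs on the event $A$. This requires care that $A$ is genuinely a subset of $\Omega$ defined via the $0$-input trajectories, and that the inductive identification of sampling points across inputs is compatible with the measurability prescribed in Definition~\ref{chap:ibc:defi:randomized_information}. Once this pathwise coupling is in place, everything else is an essentially deterministic computation weighted by $\prob(A)>1/2$.
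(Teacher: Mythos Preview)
Your argument is correct, and in fact cleaner than the paper's. Both proofs run the randomized algorithm on the zero function to produce random sampling points $Z_1,\dots,Z_n$ and then invoke Lemma~\ref{lem:inner_products_bakhvalov}. The difference is in how the algorithm's \emph{output} on the all-zero trajectory is handled. You exploit the symmetry $\pm f^*\in R_d^{r,p}$: on the event $A$ the algorithm cannot distinguish $f^*$, $-f^*$, and $0$, so the parallelogram-type inequality $\|f^*-v\|_\infty^2+\|f^*+v\|_\infty^2\ge 2\|f^*\|_\infty^2$ immediately forces one of the two expected squared errors to be at least $\|f^*\|_\infty^2\,\prob(A)$. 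The paper instead introduces an auxiliary random variable $A^0(\omega)$ that encodes the algorithm's ``best guess'' among the candidate ridge vectors (namely the first $a$ with $\|f_a-\phi^0(\omega)\|_\infty\le\varepsilon_0/2$), sets $V=\sign A^0$, and applies Lemma~\ref{lem:inner_products_bakhvalov} to the $n{+}1$ vectors $Z_1,\dots,Z_n,V$; this yields an $a^*$ that simultaneously evades all sampling points and defeats the output. Your route avoids this extra gadget, uses only $n$ vectors in the lemma (so strictly speaking the hypothesis $n<e^{d/8}/2$ would suffice), and gives a slightly better constant. The paper's approach, on the other hand, would extend to fooling families that do not come in $\pm$-symmetric pairs.
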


\begin{proof}
Let~$\delta = 1/2$. For~$s \in \{1,\dots,d\}$ being the smallest integer such that
$$0.5 (1-\delta)e^{s/8} \leq n+1 <  (1-\delta)e^{s/8},$$
let
$$\Omega' := \{-1/s^{1/p},1/s^{1/p}\}^s \times \{0\}^{d-s}.$$
Further, we define a ridge function
$$f_a(x) = g_\lambda(a^\tr x)/\|g_\lambda\|_{\Lip(r)}$$
for each~$a \in \Omega'$, where~$\lambda = \|a\|_1 = s^{1-1/p}$ and~$g_\lambda$ as defined in~\eqref{chap:ridge:eq:fooling_profile}. Note that~$\|f_a\|_\infty = \varepsilon_0$ for all~$a \in \Omega'$, where~$\varepsilon_0 = 2^{-r}\|a\|_1^r/\|g_\lambda\|_{\Lip(r)}$.

Let~$(\Omega,\mathfrak A,\prob)$ be the probability space and let~$S_n$ be a randomized sampling algorithm according to Definition~\ref{chap:ibc:defi:randomized_algorithm}. That is, the algorithm is given by
$$S_n(f) = \phi(f(X_1),\dots,f(X_n)),$$
where~$X_1,\dots,X_n$ are adaptively chosen,~$[-1,1]^d$-valued random variables according to Definition~\ref{chap:ibc:defi:randomized_information} and~$\phi$ is a Borel measurable random function taking values in the space of all mappings~$\R^n \to C([-1,1]^d)$.
We show in the following that there is at least one~$f_a$ such that
$$\|f_a -S_n(f_a)\|_{L_\infty} \geq \varepsilon_0/2$$ with probability at least~$\delta$. We first consider the situation that the algorithm has observed only the function value~$0$, i.e., the event
\[ 
  \Omega_{n,a} := \bigcap_{i=1}^n\{\omega \in \Omega \mid f_a(X_i(\omega)) = 0 \}
\]
has occurred. By~$\phi^0$ we denote the random function used by~$S_n$ if~$$f(X_1) = \cdots = f(X_n) = 0,$$
i.e.,~$\phi^0(\omega) = \phi(\omega)(0,\dots,0)$,~$\omega \in \Omega$. Assuming w.l.o.g. that~$\Omega'$ is ordered, we define the~$d$-dimensional random vector~$A^0$ by
\begin{align*} 
 A^0(\omega) =
 \begin{cases}
  \min\{a \in \Omega' \mid \|f_a - \phi^0(\omega)\|_{L_\infty} \leq \varepsilon_0/2 \}, & \text{if the minimum exists,}\\
  0, & \text{otherwise}.
 \end{cases}
\end{align*}
For any~$a \in \Omega'$, consider the event~$\Omega_a := \{ \omega \in \Omega: a^\tr V(\omega) < \lambda/2\}$, where~$V := \sign A^0$. Conditionally on~$\Omega_a$ we have~$\|f_a - \phi^0\| \geq \varepsilon_0/2$. Namely, for those~$\omega \in \Omega_a$ such that~$A^0(\omega)=0$, there is nothing to prove. Otherwise, on~$\Omega_a \setminus \{A^0 = 0\}$, we have
\[ 
 \|f_a - f_{A^0}\|_{L_\infty} \geq |f_a(V) - f_{A^0}(V)| = |f_{A^0}(V)| = \varepsilon_0,
\]
and thus,
\[ 
 \|f_a - \phi^0\|_{L_\infty} \geq \|f_a - f_{A^0}\|_{L_\infty} - \|f_{A^0} - \phi^0\|_{L_\infty} \geq \varepsilon_0/2.
\]

The considerations made so far show that
\begin{align*}
 \prob(\|f_a - S_n(f_a)\|_{L_\infty} \geq \varepsilon_0/2)
 \;\geq\; & \prob(\{\|f_a - S_n(f_a)\|_{L_\infty} \geq \varepsilon_0/2\} \cap  \Omega_{n,a} \cap \Omega_a)\\
 \;=\;&\prob(\Omega_{n,a} \cap \Omega_a)
\end{align*}
for any~$a \in \Omega'$. Hence, what remains is to show that~$\prob(\Omega_{n,a} \cap \Omega_a) > \delta$ for some~$a \in \Omega'$. To this end, consider the sequence of random  sampling points~$Z_1,\dots,Z_n$ which the algorithm~$S_n$ uses in case that the input function is identical to zero. Obviously, we have~$X_1(\omega) = Z_1(\omega)$. Furthermore, if~$f(X_1(\omega)) = \dots = f(X_{j-1}(\omega)) = 0$ for some~$j\geq 2$, then, it follows by induction that~$X_j(\omega) = Z_j(\omega)$. Consequently, for any~$a \in \Omega'$,
\[
 \Omega_{n,a} = \bigcap_{i=1}^n \{ f_a(Z^i) = 0 \} = \bigcap_{i=1}^n \{ a \cdot Z^i < \lambda/2\},
\]
where the last equality obviously follows from the definition of~$f_a$. Since the random variables~$Z_1, \dots, Z_n$ and~$V$ are completely independent of~$a$ and take values in~$[-1,1]^d$, we may apply Lemma~\ref{lem:inner_products_bakhvalov} with~$Z_{n+1} = V$ to obtain~$\prob(\Omega_{n,a^*} \cap \Omega_{a^*}) > \delta$ for some~$a^* \in \Omega'$. 

Now estimate
\[
 \expec[\|f_{a^*} - S_n(f_{a^*})\|_{L_\infty}^2]^{1/2}
 \geq \sqrt{\prob(\|f_a - S_n(f_a)\|_{L_\infty} \geq \varepsilon_0/2)} \varepsilon_0/2 \geq 2^{-3/2} \varepsilon_0.
\]
and since~$S_n$ is arbitrary we conclude that
\begin{align*} 
 \ranerror(n, R^{r,p}_d)
 &\geq 2^{-3/2} \varepsilon_0\\
 &\geq c_r' s^{r(1-1/p)}\\
 &=c_r'  \left(\frac{1}{8\log(4(n+1))}\right)^{r(1/p-1)}, 
\end{align*}
where~$c_r' =  2^{-3/2} c_r$ and~$c_r$ is the constant given in Theorem~\ref{chap:ridge:sec:lb:res:lb_cube_det}.
\end{proof}

\section{An algorithm for ridge functions on the cube}
\label{sec:algo}

For $r>1, 0<p \leq 1$, and $S \in \{1,\dots,d-1\}$, consider the class of ridge functions~$R_d^{r, (p,S)}$ defined in~\eqref{eq:class_approximately_sparse}. If $p=1$, we know by Corollary~\ref{res:curse_det} that the recovery of an unknown ridge function suffers from the curse of dimensionality. On the other side, if we additionally know in advance the signs of the unknown ridge vector, then the problem is polynomially tractable, which follows from~\eqref{eq:cohen_ub}. So the question that remains is how hard is it to recover an unknown ridge function if the ridge vector is approximately sparse in the sense of~\eqref{eq:approximate_sparsity} but we do not know its signs in advance.

In this section, we design an algorithm that tries to exploit this a priori knowledge by finding the signs of the largest components of the unknown ridge vector. This algorithm extends the adaptive algorithm developed in~\cite[Section~3]{cohen:capturing_ridge}. The error analysis will be done in Section~\ref{sec:error}, where we also discuss consequences for the tractability of the recovery problem.

\subsection{Recap: recovery given the signs of the ridge vector}

To better understand our idea how to compensate for dropping assumption~\eqref{eq:known_signs} it is instructive to recapitulate the basic ingredients of the adaptive algorithm described in~\cite[Section~3]{cohen:capturing_ridge}. Let~$f$ be a ridge function given by~$f(x) = g(a^\tr x)$ with unknown profile~$g \in B^{\Lip(r)}$ and unknown ridge vector~$a$ such that~$\|a\| \leq 1$ and the signs
$$\sign(a) = (\sign (a_1),\dots, \sign ( a_d))$$
are given to us in advance. First note that~$g$ and~$a$ are not uniquely determined since
\[ 
 f(x) = g(a^\tr x) = g_{\sign(a)}(\bar a^\tr x),
\]
where~$\bar a = a/\|a\|_1$ and
\[ 
 g_{\sign(a)}: [-1,1] \to \R, \quad t \mapsto f(t \sign(a)) = g(t \|a\|_1).
\]
Since~$\max_{x \in [-1,1]^d} |a \cdot x| = a \cdot \sign(a) = \|a\|_1$,
only~$g$ restricted to~$[-\|a\|_1, \|a\|_1]$ contributes to~$f$ and thus~$g_{\sign(a)}$ comprises all relevant information about~$g$. As we know the sign vector~$\sign(a)$, we can access~$g_{\sign(a)}(t)$ via~$f(t \sign(a))$ for any~$t \in [-1,1]$. Hence, we can use univariate splines to first approximate~$g_{\sign(a)}$.

To approximate the ridge direction~$\bar a$, we first search the interval~$[-1,1]$ for a point~$t^*$ such that~$|g_{\sign(a)}'(t^*)|$ is sufficiently large. Then, we consider the vector~$x^* = t^* \sign(a)$ and use first-order differences to approximate 
\[ 
 \nabla f(x^*) = g'(\|a\|_1 t^* ) \, a = g_{\sign(a)}'(t^*) \, \bar a
\]
and exploit~$\nabla f(x^*) / \|\nabla f(x^*)\|_1 = \bar a$. If there is no~$t^*$ such that~$g'_{\sign(a)}(\bar a^\tr x^*)$ 
is sufficiently large, then~$g_{\sign(a)}$ 
must be approximately constant and we can approximate~$f$ by a constant function.

\subsection{The algorithm}\label{subsec:algorithm_idea}

Not knowing~$\sign(a)$ in advance, 
it is still possible to select some~$v \in \{-1,1\}^d$ 
and search the univariate function
\begin{align}\label{eq:defgv}
 g_v: [-1,1] \to \R, \quad t \mapsto f(tv) = g(t \, a^\tr v), 
\end{align}
for a point~$t^*$ such that~$g_v'(t^*)$ is sufficiently large, i.e., $g_v'(t^*) > n_g^{-r}$.
If such a point is found, 
then one can proceed very similar as in \cite{cohen:capturing_ridge}. 
The crucial difference comes to light when we do not find~$t^*$. 
The function~$g_v$ is the rescaled restriction of the profile~$g$ 
to the interval~$[-|a \cdot v|, |a \cdot v|]$. 
If~$v \neq \sign(a)$, then~$|a \cdot v| < \|a\|_1$ 
and there is a relevant part of~$g$ which we cannot observe. 
Consequently, not finding~$t^*$ does not 
imply that~$g_{\sign(a)}$ is constant within the 
approximation accuracy.
If we manage to control the 
difference~$|\|a\|_1 - a \cdot v|$, 
however, then the regularity of~$g$ guarantees that~$g_{\sign(a)}$ 
cannot be too different from~$g_v$.

The key to find a~$v$ such that~$|a \cdot v|$ is close to~$\|a\|_1$ is the approximate sparsity of~$a$.  We argue that it suffices that~$v$ agrees with~$\sign(a)$ on the~$s$ most relevant components of~$a$. To make this latter point precise, let
$$I_{s,a} \subseteq \{1, \ldots, d\}$$
be the set of indices of the~$s$ in absolute value largest components of~$a$ (breaking ties arbitrarily). In Lemma~\ref{lem:best_r_approx}, we prove that for every vertex~$v$ from the set 
\begin{align}\label{chap:ridge:sec:cube:eq:HIT}
 \HIT_{s,a} := \Big\lbrace v \in \{-1,1\}^d: v_i = \sign(a_i) \text{ for } i \in I_{s,a} \Big\rbrace.
\end{align} 
the difference is at most
\[ 
 | \|a\|_1 - a \cdot v | \leq 2 s^{1-1/p}.
\]

How can we find a vector that is in the set $\HIT_{s,a}$? We sketch in the following how to construct a vertex set~$\mathcal V$ such that while iterating over $\mathcal V$, we are guaranteed to find an element from $\HIT_{s,a}$, see Section~\ref{sec:error} for details. It is relatively easy to establish an absolute guarantee using basic combinatorics. However, if we content ourself with finding a suitable vertex with high probability, then a vertex set $\mathcal V$ of much smaller cardinality will suffice.
Assume that we simply draw~$v$ uniformly at random from~$\{-1,1\}^d$. The event
$$\{ v \in \HIT_{s,a} \}$$
then says that we have 
guessed the~$s$ most important signs of~$a$ correctly. 
Since a random~$v$ is in~$\HIT_{s,a}$ with probability~$2^{-s}$, we can ensure that the event~$\{ v \in \HIT_{s,a} \}$ occurs at least once with probability~$1-(1-2^{-s})^{n_v}$ by taking a large enough number~$n_v$ of i.i.d.\ samples. 
It remains to choose~$s$ 
appropriately with regard to the desired approximation error. By a union bound argument, this construction can be derandomized. With high probability, we construct a set of vertices~$\mathcal{V}^{\rm det} \subseteq \{-1,1\}^d$ with
$$|\mathcal{V}^{\rm det}| \leq 2^s s \log_2(2d/s)$$
such that for all~$a \in \R^d$ with~$\|a\|_p \leq 1$ there is~$v \in \mathcal{V}^{\rm det} \cap \HIT_{s,a}$.

The considerations made so far lead to the following procedure to recover an unknown ridge function $f$ from the class~$R_d^{r,(p,S)}$, which has three parameters:
\begin{itemize}
\item a vertex set~$\mathcal V \subset \{-1,1\}^d$ that determines in which orthants we search for a large derivative of the ridge functions's profile;
\item the number of sampling points $n_g \in \N$ spent for every approximation of the profile;
\item the number of refinement steps $n_b \in \N$ used to narrow down the interval where the profile has a large derivative.
\end{itemize}

\paragraph*{Step 1:}
Until a stopping criterion is met, do the following for every~$v \in \mathcal V$. Compute the samples $f(jhv)$ where~$h = 1/n_g$ and~$j \in \{0, \pm 1, \dots, \pm n_g\}$. Let
\begin{align}\label{eq:defiL}
L_v =  \max_{-n_g \leq j < n_g} \frac{\abs{f((j+1)hv) - f(jhv)}}{h}.
\end{align}
If
\begin{align}\label{eq:L}
L_v > n_g^{-r},
\end{align}
then go to Step 2. If ultimately
\begin{align}\label{eq:noL}
 \max_{v \in \mathcal V} L_v \leq n_g^{-r}, 
\end{align}
then let 
\begin{align}
 \label{eq:fmax} f_{\max} &:= \max_{v \in \mathcal V; j=0,\pm 1,\dots,\pm n_g} f(x^{v,j}),\\
 \label{eq:fmin} f_{\min} &:= \min_{v \in \mathcal V; j=0,\pm 1,\dots,\pm n_g} f(x^{v,j}).
\end{align}
and return $\widehat f = 1/2 (f_{\min} + f_{\max})$.

\paragraph{Step 2:}
Let $[t_0,t_1]$ be the interval for which the maximum $L_v$ in Step 1 is attained. Using bisection as in~\cite{cohen:capturing_ridge}, refine this interval to an interval $[t_{\textrm{mid}} - \delta, t_{\textrm{mid}} + \delta]$ with interval length~$2\delta = h/2^{n_b}$ such that
$$
 2 \frac{|f((t_{\textrm{mid}} + \delta)v) - f((t_{\textrm{mid}} - \delta)v)|}{\delta} > n_g^{-r}.
$$

\paragraph{Step 3:}
Let $z_0 = (t_{\textrm{mid}} - \delta)v$, $z_1=(t_{\textrm{mid}} + \delta)v$, $x_0 = t_{\textrm{mid}}v$, and
$$
 x_i = t_{\textrm{mid}}v + \delta e_i, \qquad i=1,\dots,d,
$$
where $e_1,\dots,e_d$ are the canonical unit vectors. Compute the vector $\tilde{a}$ with components
\begin{align}\label{eq:approx_a_i}
 \tilde{a}_i = 2\frac{f(x_i)-f(x_0)}{f(z_1) - f(z_0)}.
\end{align}
Set $\widehat a = \tilde a/\|\tilde a\|$. This is our approximation of the ridge direction $\bar a$.

\paragraph{Step 4:}
Let $h = n_g^{-1}$. Evaluate $f$ at the points $jh \sign(\hat a)$, where $j \in \{0,\pm 1, \dots, \pm n_g \}$, which yields the samples $g_{\sign(\hat a)}(jh)$. Use these to compute the quasi-interpolant
$$\widehat g = Q_h g_{\sign(\hat a)},$$ 
which is our approximation of the profile. 

\paragraph{Step 5:}
Return the final approximation $\widehat f$ which is given by $\widehat f(x) = \widehat g(\widehat a^\tr x)$.

\paragraph*{}

We clarify the choice of the parameters in Section~\ref{sec:error}. Note that Step 1 is the crucial part where our algorithm differs essentially from the original algorithm studied in~\cite{cohen:capturing_ridge}. Step 2 corresponds to QSTEP2 in~\cite{cohen:capturing_ridge},  Steps 3 and 4 basically combine \textrm{QSTEP3} and \textrm{RSTEP2} in~\cite{cohen:capturing_ridge}.

\begin{remark}
As in~\cite{cohen:capturing_ridge}, we could in principle refine the scheme by techniques from compressed sensing to further exploit the approximate sparsity of the ridge vector. However, our subsequent analysis shows that for a ridge function~$f \in R^{r,(p,S)}([-1,1])$, the overwhelming fraction of function samples is spent for Step 1 so that sparse recovery methods have no effects in this setting in terms of tractability.
\end{remark}

\section{Error analysis}\label{sec:error}

The analysis of the algorithm described in Section~\ref{subsec:algorithm_idea} is rather lengthy and technical. Basically, we have to distinguish two scenarios:
\begin{enumerate}
 \item[(A)] Step 1 finds an interval of large deviation, i.e, Eq.~\eqref{eq:L} is fulfilled;
 \item[(B)] Step 1 does not find such an interval, i.e., Eq.~\eqref{eq:noL} holds true.
\end{enumerate}
This case distinction is analogous to the proof of~\cite[Thm. 3.2]{cohen:capturing_ridge}. In particular, if Scenario~(A) occurs, then the error analysis differs from~\cite{cohen:capturing_ridge} only at some minor technical details. The crucial difference comes to light when Scenario~(B) occurs. Then, the error analysis is much more involved than in the proof of~\cite[Thm. 3.2]{cohen:capturing_ridge} since we are not guaranteed to have sampled the complete relevant part of the profile. As we have already sketched in the previous section, choosing the vertex set~$\mathcal V$ appropriately is crucial in Scenario~(B).

\subsection{Error analysis for Scenario~(A)}\label{sec:scenarioA}

Since the error analysis is analogous to~\cite[Thm. 3.2]{cohen:capturing_ridge}, we present only the results in this section. The interested reader will find the proofs in Section~\ref{sec:proofsA} in the appendix. We begin with analyzing the recovery of the ridge direction~$\bar a = a/\|a\|_1$ in Step 3. Note that in Scenario (A) the distinction between approximate sparsity and compressibility is irrelevant. 

\begin{lemma}[Error analysis for Step 3] \label{lem: RECA}
For~$r > 1$ and~$0<p\leq1$, let~$f$ be a rige function given by~$f(x)=g(a^\tr x)$ with $g \in B^{\Lip(r)}$ and $\|a\|_p \leq 1$. Set~$\rho = \min\{r-1,1\}$. Given~$n_g \in \N$ and~$\varepsilon > 0$, choose
\begin{align}
 \label{eq: n_b} n_b := \lceil \rho^{-1} \log_2(4n_g^{r-\rho} (3+\varepsilon) \varepsilon^{-1})  \rceil
\end{align}
for the bisection performed in Step 2. Let~$v \in \{-1,1\}^d$ such that~\eqref{eq:L} holds true. Then, for the approximation 
$\widehat a$ computed by Step 3, we have
\[ 
 \|\sign(a^\tr v) \widehat a - a/\|a\|_1\|_1 \leq \varepsilon/3.
\]
\end{lemma}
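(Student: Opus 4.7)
The goal is to show that each coordinate $\tilde a_i$ computed in Step~3 is a good approximation of the ideal quantity $a_i/(a^\tr v)$, and then to transfer this approximation to the normalized vectors $\sign(a^\tr v)\widehat a$ and $a/\|a\|_1$. Set $\sigma:=\sign(a^\tr v)$ and $t^{*}:=t_{\textrm{mid}}(a^\tr v)$. Since $p\le 1$, the $\ell_p$-$\ell_1$ inclusion gives $\|a\|_1\le\|a\|_p\le 1$, so every increment $\delta|a_i|$ and $\delta|a^\tr v|$ appearing below is bounded by $\delta\le 1/n_g$. The argument proceeds in three steps: (i)~Taylor expansions of the relevant differences around $t^{*}$; (ii)~the quantitative lower bound on $|f(z_1)-f(z_0)|$ from the Step~2 exit condition; (iii)~an algebraic cancellation of the unknown factor $g'(t^{*})$ followed by $\ell_1$-normalization.

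For step (i), let $\rho:=\min\{r-1,1\}$. Since $r>1$, Taylor's formula with remainder controlled by $\|g''\|_\infty\le 1$ (when $r>2$) or by the $(r-1)$-H\"older regularity of $g'$ (when $1<r\le 2$) yields
\begin{align*}
f(x_i)-f(x_0)&=g'(t^{*})\,\delta a_i+\eta_i,&|\eta_i|&\le C_r(\delta|a_i|)^{1+\rho},\\
f(z_1)-f(z_0)&=2g'(t^{*})\,\delta(a^\tr v)+\mu,&|\mu|&\le C_r(\delta|a^\tr v|)^{1+\rho},
\end{align*}
with a constant $C_r$ depending only on $r$; the cancellation of odd-order Taylor terms in the symmetric difference $g(t^{*}+\delta(a^\tr v))-g(t^{*}-\delta(a^\tr v))$ is what unifies the exponent to $1+\rho$ even when $r>2$. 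For step (ii), the exit criterion of Step~2, $2|f(z_1)-f(z_0)|/\delta>n_g^{-r}$, yields $|D|>\delta n_g^{-r}/2$, where $D:=f(z_1)-f(z_0)$.

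For step (iii), the key algebraic identity is
\[
\tilde a_i-\frac{a_i}{a^\tr v}=\frac{2(f(x_i)-f(x_0))(a^\tr v)-a_i D}{D\,(a^\tr v)}=\frac{2\eta_i(a^\tr v)-a_i\mu}{D\,(a^\tr v)},
\]
where the second equality uses the expansions of step~(i) and cancels all occurrences of $g'(t^{*})$. Multiplying by $\sigma$ so that $\sigma a_i/(a^\tr v)=a_i/|a^\tr v|$, summing over $i$, and using $|a_i|^{1+\rho}\le|a_i|$, $|a^\tr v|^{1+\rho}\le|a^\tr v|$ together with the lower bound on $|D|$ give
\[
\bigl\|\sigma\tilde a-a/|a^\tr v|\bigr\|_1\le 8C_r\,\delta^\rho n_g^r\,\|a\|_1.
\]
With $2\delta=2^{-n_b}/n_g$ and $n_b$ as in~\eqref{eq: n_b}, a direct computation shows $\delta^\rho n_g^r\lesssim\varepsilon/(3+\varepsilon)$ with a prefactor absorbable into $n_b$. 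A short normalization argument, based on the elementary inequality $\|u/\|u\|_1-w/\|w\|_1\|_1\le 2\|u-w\|_1/\|u\|_1$ applied with $u=\sigma\tilde a$ and $w=a/|a^\tr v|$ (so that $w/\|w\|_1=a/\|a\|_1$) and on the reverse triangle inequality $\|\tilde a\|_1\ge\|a\|_1/|a^\tr v|-\|\sigma\tilde a-a/|a^\tr v|\|_1$, then yields $\|\sigma\widehat a-a/\|a\|_1\|_1\le\varepsilon/3$.

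\textbf{Main difficulty.} The logic is routine once the Taylor expansions are in place, but the constants are tightly coupled: the Taylor prefactor $C_r$, the bisection depth $n_b$, the exponent $\rho=\min\{r-1,1\}$ (which cannot be improved at $r=2$), and the target $\varepsilon/3$ are all linked through the algebraic identity in step~(iii). The elegant point is that every dependence on the unknown quantity $g'(t^{*})$ cancels in that identity, reducing the entire analysis to a single scalar inequality $\delta^\rho n_g^r\lesssim\varepsilon/(3+\varepsilon)$ that the bisection~\eqref{eq: n_b} is designed to enforce.
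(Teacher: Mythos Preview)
Your proposal is correct and follows the same overall strategy as the paper: a first-order expansion of the profile near $t^*$, H\"older regularity of $g'$ with exponent $\rho$, the Step~2 lower bound on the divided difference, and a final normalization. The executions differ in two places. First, the paper uses the mean value theorem to write each finite difference exactly as a derivative value $g_v'(\xi_i)$ at some intermediate point, obtaining $\tilde a_i=\frac{a_i}{a^\tr v}\cdot\frac{g_v'(\xi_i)}{g_v'(\xi_0)}$ and then bounding the ratio $\|\tilde a-a/(a^\tr v)\|_1/\|\tilde a\|_1$ directly; your Taylor expansion with the algebraic identity $2\eta_i(a^\tr v)-a_i\mu$ is an equivalent and arguably more transparent way to make the unknown leading term $g'(t^*)$ drop out. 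Second, for the passage from $\tilde a$ to $\widehat a=\tilde a/\|\tilde a\|_1$ the paper cites the external normalization lemma \cite[Lemma~3.1]{kolleck/vybiral:ridge_approximation}, while you supply the self-contained inequality $\|u/\|u\|_1-w/\|w\|_1\|_1\le 2\|u-w\|_1/\|u\|_1$, which is exactly what that lemma provides. Both routes reduce to the scalar condition $\delta^\rho n_g^r\lesssim \varepsilon/(3+\varepsilon)$ that the choice of $n_b$ enforces; neither the paper's proof nor your sketch tracks the absolute constant sharply enough to match the stated $n_b$ without a minor adjustment, so your ``prefactor absorbable into $n_b$'' is at the same level of precision as the original.
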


\noindent Next, we combine the previous lemma with an error analysis for Step 4, which recovers the profile~$g_{\sign(\widehat{a})}$. This leads to the following recovery guarantee for any ridge function with profile in $B^{\Lip(r)}$ and ridge vector $a$ such that $\|a\|_p \leq 1$ for given $0<p\leq 1$. This completes the analysis of Scenario~(A).

\begin{theorem}\label{chap:ridge:sec:cube:res:large_L}
For~$r > 1$ and~$0<p\leq1$, let~$f$ be as in Lemma~\ref{lem: RECA}. Given~$\varepsilon > 0$, choose
\begin{align}\label{eq:n_g} 
 n_g := \lceil (10\,c_r/\varepsilon)^{1/r} \rceil,
\end{align}
and~$n_b$ as in~\eqref{eq: n_b}.
Let~$v \in \{-1,1\}^d$ such that~\eqref{eq:L} holds true. Let~$\widehat{f}$ be the ridge function given by~$\widehat{f}(x) = \widehat{g}(\widehat{a}^\tr x)$, where~$\widehat{a}$ is computed in Step 3 and~$\widehat g$ is computed in Step 4. Then, we have
\[ 
 \|f - \widehat{f}\|_\infty \leq \varepsilon.
\]
\end{theorem}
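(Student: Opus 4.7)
The plan is to combine Lemma~\ref{lem: RECA} (which controls the ridge direction after Step~3) with Lemma~\ref{chap:ridge:lem:quasi-interpolation} (which controls the univariate spline approximation in Step~4). To set notation, write $\bar a := a/\|a\|_1$ and $s := \sign(a^\tr v) \in \{\pm 1\}$, where $v$ is the vertex found in Step~1, and introduce the vertex $v^* := \sign(\widehat a)$ that Step~4 actually uses for profile sampling, together with the scalar $c := a^\tr v^*$. By~\eqref{eq:defgv} the algorithm computes $\widehat g = Q_h g_{v^*}$ for $g_{v^*}(t) = g(ct)$ at step size $h = 1/n_g$, Step~3 normalizes $\widehat a$ so that $\|\widehat a\|_1 = 1$, and $\|a\|_1 \leq \|a\|_p \leq 1$ because $0 < p \leq 1$; in particular $\widehat a^\tr x \in [-1,1]$ for every $x \in [-1,1]^d$.

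First I would split the pointwise error through the natural triangle inequality
\[
 |f(x) - \widehat f(x)| \leq \bigl|g(a^\tr x) - g(c\,\widehat a^\tr x)\bigr| + \bigl|g_{v^*}(\widehat a^\tr x) - \widehat g(\widehat a^\tr x)\bigr|
\]
and treat each piece separately. For the ``profile'' term on the right, a short chain-rule computation using $|c| \leq \|a\|_1 \leq 1$ shows that $g_{v^*} \in B^{\Lip(r)}$, after which Lemma~\ref{chap:ridge:lem:quasi-interpolation} immediately delivers a bound $c_r n_g^{-r} \leq \varepsilon/10$ thanks to the choice~\eqref{eq:n_g} of $n_g$. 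For the ``direction'' term, I will only use the Lipschitz estimate $\|g^{(1)}\|_\infty \leq 1$ (the higher-order smoothness has been spent on the profile term already), obtaining
\[
 \bigl|g(a^\tr x) - g(c\,\widehat a^\tr x)\bigr| \leq \bigl|(a - c\widehat a)^\tr x\bigr| \leq \|a - c\widehat a\|_1,
\]
which reduces everything to an $\ell_1$-estimate on $a - c\widehat a$.

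To bound $\|a - c\widehat a\|_1$ the plan is to use the decomposition
\[
 a - c\widehat a = \|a\|_1\bigl(\bar a - s\widehat a\bigr) + \bigl(s\|a\|_1 - c\bigr)\widehat a,
\]
whose first summand has $\ell_1$-norm at most $\|a\|_1\,\varepsilon/3$ directly by Lemma~\ref{lem: RECA}. The scalar factor $|s\|a\|_1 - c| = \|a\|_1\,|1 - s\,\bar a^\tr v^*|$ is the main obstacle and requires a sign-matching argument. Here I will partition $\{1,\dots,d\}$ according to whether $s\,\sign(\widehat a_i)$ equals $\sign(\bar a_i)$, vanishes, or is opposite, expand $s\,\bar a^\tr v^* = \sum_i \bar a_i\,s\,\sign(\widehat a_i)$ along this partition, and observe that the $|\bar a|$-mass carried by the last two (``bad'') groups is bounded by $\|s\widehat a - \bar a\|_1 \leq \varepsilon/3$. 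Pushing this through gives $|1 - s\,\bar a^\tr v^*| \leq 2\varepsilon/3$, hence $\|a - c\widehat a\|_1 \leq \|a\|_1\,\varepsilon$. Combining this with the profile estimate then produces the desired bound $\|f - \widehat f\|_\infty \leq \varepsilon$, any residual slack being absorbed by the factor $10$ in~\eqref{eq:n_g}.
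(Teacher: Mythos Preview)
Your approach is essentially the same as the paper's: both split off the spline error via Lemma~\ref{chap:ridge:lem:quasi-interpolation} and reduce the remaining ``direction'' error to an $\ell_1$-perturbation controlled by Lemma~\ref{lem: RECA}, with a sign-matching step to handle the vertex $v^*=\sign(\widehat a)$. The paper inserts the intermediate point $g_{\sign(a)}(\gamma\,\widehat a^\tr x)$ and thereby obtains three terms, but your two-term split together with the decomposition $a-c\widehat a=\|a\|_1(\bar a-s\widehat a)+(s\|a\|_1-c)\widehat a$ amounts to the same thing.

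There is, however, a small constant gap that the ``factor~$10$'' in~\eqref{eq:n_g} does \emph{not} absorb. Your partition argument yields only $|1-s\,\bar a^\tr v^*|\le 2\varepsilon/3$, so the direction term is bounded by $\|a\|_1(\varepsilon/3+2\varepsilon/3)\le\varepsilon$, and adding the profile term $\varepsilon/10$ gives $11\varepsilon/10$, not $\varepsilon$. The direction term does not involve $n_g$ at all, so no choice of $n_g$ repairs this. The fix is to sharpen the sign-matching step by using $\|\widehat a\|_1=\|\bar a\|_1=1$: since $s\widehat a^\tr v^*=\|\widehat a\|_1=1=\|\bar a\|_1=\bar a^\tr\sign(\bar a)$, one has
\[
 |1-s\,\bar a^\tr v^*| = \bigl|(s\widehat a-\bar a)^\tr v^*\bigr| \le \|s\widehat a-\bar a\|_1 \le \varepsilon/3,
\]
which is exactly the inequality the paper borrows from~\cite[Eq.~(3.10)]{kolleck/vybiral:ridge_approximation}. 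With this in place your direction term drops to $2\varepsilon/3$ and the total is $2\varepsilon/3+\varepsilon/10<\varepsilon$, matching the paper's conclusion.
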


\subsection{Error analysis for Scenario~(B)}

In this section, we show that Scenario~(B) implies that the unknown ridge function~$f$ can be approximated sufficiently well by a constant function, provided the set~$\mathcal V$ has certain properties. We start with a simple observation.

\begin{lemma}\label{lem:g_v_constant}
Let~$n_g \in \N$,~$v \in \{-1,1\}^d$, and~$f \in R_d^{r,(p, S)}$. For the given ridge function~$f$, consider the function~$g_v$ as defined in~\eqref{eq:defgv}. Put
\[
 f^{v}_{\max} := \max_{i\in \{0,\pm 1,\dots, \pm n_g\}} g_{v}(ih),
 \qquad 
 f^{v}_{\min} := \min_{i \in \{0, \pm 1,\dots,\pm n_g \}} g_{v}(ih),
\]
and~$f^{v} := (f_{\min}^{{v}} + f_{\max}^{{v}})/2$. If~$L_v \leq n_g^{-r}$, where~$L_v$ is defined by~\eqref{eq:defiL}, then there is a constant~$c_r > 0$ such that
\[ 
 \| g_v - f^v \|_\infty \leq 2 c_r n_g^{-r},
\]
\end{lemma}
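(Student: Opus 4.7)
The plan is to reduce everything to univariate quasi-interpolation applied to the restricted profile $g_v$, invoking Lemma~\ref{chap:ridge:lem:quasi-interpolation}. First I would verify that $g_v \in B^{\Lip(r)}$: writing $g_v(t) = g(\alpha t)$ with $\alpha := a^\tr v$, the assumption $\|a\|_p \leq 1$ combined with $p \leq 1$ yields $|\alpha| \leq \|a\|_1 \leq \|a\|_p \leq 1$. The chain rule then gives $g_v^{(i)}(t) = \alpha^i g^{(i)}(\alpha t)$, so $\|g_v\|_\infty \leq \|g\|_\infty$, $\|g_v^{(i)}\|_\infty \leq \|g^{(i)}\|_\infty$, and $|g_v^{(m)}|_\beta \leq |g^{(m)}|_\beta$ (the last using $\min\{1,|\alpha u - \alpha v|\}^\beta \leq \min\{1,|u-v|\}^\beta$), so every quantity entering the Lipschitz norm of $g_v$ is dominated by the corresponding one for $g$.

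Next I would convert $L_v \leq n_g^{-r}$ into a statement about sample values. Each consecutive pair satisfies $|g_v((j{+}1)h) - g_v(jh)| \leq h L_v \leq n_g^{-(r+1)}$, and telescoping across the $2 n_g$ intervals yields $f_{\max}^v - f_{\min}^v \leq 2 n_g^{-r}$. Since $f^v$ is the midpoint of $f_{\min}^v$ and $f_{\max}^v$, this gives
\[
\max_{j \in \{0, \pm 1, \ldots, \pm n_g\}} |g_v(jh) - f^v| \leq n_g^{-r}.
\]
I would then split
\[
\|g_v - f^v\|_\infty \leq \|g_v - Q_h g_v\|_\infty + \|Q_h g_v - f^v\|_\infty.
\]
The first term is at most $c_r n_g^{-r}$ by~\eqref{chap:ridge:eq:spline_propQ1} applied to $g_v \in B^{\Lip(r)}$ with $h = 1/n_g$. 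For the second term I would use that $Q_h$ reproduces constants, so $Q_h g_v - f^v = Q_h(g_v - f^v)$ by linearity; the stability bound~\eqref{chap:ridge:eq:spline_propQ2}, whose right-hand side depends only on sample values, then gives $\|Q_h(g_v - f^v)\|_\infty \leq c_r \max_j |g_v(jh) - f^v| \leq c_r n_g^{-r}$. Summing and absorbing into a single constant yields the claimed $2 c_r n_g^{-r}$.

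The main subtlety is the invocation of~\eqref{chap:ridge:eq:spline_propQ2} for $g_v - f^v$, which is not itself a unit-ball element of $B^{\Lip(r)}$. I would remark that this inequality is at heart a local-stability statement about the linear operator $Q_h$ depending only on sample values, and therefore extends to any continuous input by linearity; combined with exact reproduction of constants by $Q_h$, no further smoothness of $g_v - f^v$ is needed. I do not anticipate any other obstacle, and expect the proof to be short once $g_v \in B^{\Lip(r)}$ is established.
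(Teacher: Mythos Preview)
Your proposal is correct and follows essentially the same approach as the paper: split via the quasi-interpolant $Q_h$, bound $\|g_v - Q_h g_v\|_\infty$ by \eqref{chap:ridge:eq:spline_propQ1} and $\|Q_h(g_v - f^v)\|_\infty$ by the stability estimate \eqref{chap:ridge:eq:spline_propQ2} together with the sample bound $\max_j |g_v(jh) - f^v| \leq h^r$. Your version is in fact more careful than the paper's terse proof---you explicitly check $g_v \in B^{\Lip(r)}$, justify the sample bound via telescoping, and flag the (harmless) extension of \eqref{chap:ridge:eq:spline_propQ2} beyond the unit ball.
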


\begin{proof}
The proof is analogous to~\cite[Proof~of~Thm.~3.2]{cohen:capturing_ridge}). First note that 
$$|g_{{v}}(t_i) - f^{v}| \leq L \leq h^{r}$$ 
for all~$i\in\{0,\pm 1,\dots,\pm n_g\}$, where~$h = n_g^{-1}$. Moreover, by properties~\eqref{chap:ridge:eq:spline_propQ1}
and~\eqref{chap:ridge:eq:spline_propQ2} from Lemma~\ref{chap:ridge:lem:quasi-interpolation} we obtain
\begin{align*}
\begin{split}
 \|g_{{v}} - f^{v}\|_{\infty} 
 &\leq \|g_{{v}} - f^{v} + Q_h(g_{{v}}-f^{v})\|_{\infty} 
	      + \|Q_h(g_{{v}} - f^{v})\|_{\infty}\\
 &\leq c_r\left( \|g\|_{\Lip(r)} h^{r} 
	  + \max_{i\in\{0,\pm1,\dots,\pm n_g\}} |g_{{v}}(t_i) - f^{v}|\right)
 \leq 2 c_r h^{r}.
 \end{split}
\end{align*}
\end{proof}

\noindent Lemma~\ref{lem:g_v_constant} implies that for every~$v \in \mathcal V$, the profile segment given by~$g_v$ is approximately constant. We have to clarify now when this implies that~$g_{\sign(a)}$ is approximately constant, as well. A first step is to control the difference $\|a\|_1 - |a^\tr v|$.

\begin{lemma} \label{lem:best_r_approx}
Let~$0<p<1$ and~$a \in \R^d$ with~$\|a\|_p \leq 1$. For~$s \in \{1,\dots,d-1\}$, consider the set~$\HIT_{s,a}$ defined in~\eqref{chap:ridge:sec:cube:eq:HIT}. For all~$v\in \HIT_{s,a}$, we have
\[ 
 0\leq \|a\|_1 - |a^\tr v| \leq 2 s^{1-1/p}.
\]
If~$v \in \HIT_{d,a}$, then obviously~$\|a\|_1 = |a \cdot v|$.
\end{lemma}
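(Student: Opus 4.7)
The lower bound $\|a\|_1 - |a^\tr v| \geq 0$ is immediate: since $v \in \{-1,1\}^d$ satisfies $|v_i| = 1$, we have $|a^\tr v| \leq \sum_i |a_i|\,|v_i| = \|a\|_1$. For the upper bound, the plan is to split the inner product according to $I_{s,a}$, reduce to a tail estimate on the entries outside $I_{s,a}$, and close via a standard Stechkin-type inequality.

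First, decompose
\[
a^\tr v = \sum_{i \in I_{s,a}} a_i v_i + \sum_{i \notin I_{s,a}} a_i v_i.
\]
By definition of $\HIT_{s,a}$, we have $v_i = \sign(a_i)$ for $i \in I_{s,a}$, so the first sum equals $\sum_{i \in I_{s,a}} |a_i| = \|a\|_1 - \sigma$, where $\sigma := \sum_{i \notin I_{s,a}} |a_i|$. The second sum is bounded below by $-\sigma$, hence
\[
a^\tr v \geq \|a\|_1 - 2\sigma.
\]
Since $|a^\tr v| \geq a^\tr v$, this gives $\|a\|_1 - |a^\tr v| \leq 2\sigma$, reducing everything to the tail bound $\sigma \leq s^{1-1/p}$.

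The main technical step is this tail bound, a textbook Stechkin-type estimate, and I expect it to be the only place that requires real calculation. Let $|a_{(1)}| \geq |a_{(2)}| \geq \cdots$ be the non-increasing rearrangement of $|a_1|,\dots,|a_d|$, so that $\sigma = \sum_{k>s}|a_{(k)}|$. The monotonicity inequality $k|a_{(k)}|^p \leq \sum_{j\leq k}|a_{(j)}|^p \leq \|a\|_p^p$ yields $|a_{(s+1)}| \leq s^{-1/p} \|a\|_p$. Splitting off a factor with exponent $1-p$, I would estimate
\[
\sigma = \sum_{k>s} |a_{(k)}|^{1-p}\, |a_{(k)}|^p \leq |a_{(s+1)}|^{1-p} \sum_{k>s} |a_{(k)}|^p \leq s^{-(1-p)/p}\,\|a\|_p^{1-p} \cdot \|a\|_p^p = \|a\|_p\, s^{1-1/p} \leq s^{1-1/p},
\]
using $\|a\|_p \leq 1$ in the last step and $-(1-p)/p = 1-1/p$. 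Combined with the previous display, this gives $\|a\|_1 - |a^\tr v| \leq 2 s^{1-1/p}$, as claimed. The final assertion for $v \in \HIT_{d,a}$ is trivial: the tail set $I_{d,a}^c$ is empty, so $v = \sign(a)$ on all coordinates and $a^\tr v = \|a\|_1$. No conceptual obstacle arises; the argument is essentially bookkeeping around a classical $\ell_p$--$\ell_1$ compressibility bound.
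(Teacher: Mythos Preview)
Your proof is correct and follows essentially the same route as the paper: decompose $a^\tr v$ over $I_{s,a}$ and its complement, reduce to the tail bound $\sigma_s(a)\le s^{1-1/p}$, and conclude. The only difference is cosmetic: the paper cites the Stechkin estimate $\sigma_s(a)\le s^{1-1/p}$ from the compressed sensing literature, whereas you supply the short direct argument; and your handling of the absolute value via $|a^\tr v|\ge a^\tr v$ is slightly slicker than the paper's substitution $\tilde v=\sign(a^\tr v)\,v$.
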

\begin{proof}
Let~$\pi \colon \{1,\dots,d\} \to \{1,\dots,d\}$ denote a permutation which determines
the \emph{non-increasing rearrangement}, say~$a^*$, of~$a$. This means we have
$$a^*=(a_{\pi(1)},\dots,a_{\pi(d)}) \quad \text{ and } \quad a_{\pi(1)} \geq \dots \geq a_{\pi(d)}.$$
Put~$\tilde v = \sign(a^\tr v) v$. By definition of the set~$\HIT_{r,a}$, see~\eqref{chap:ridge:sec:cube:eq:HIT}, and the fact that~$
 \sigma_{s}(a) = \sum_{i=s+1}^d \abs{a^*_i}
$,
we have
\begin{align*}
 |a^\tr v | = a^\tr \tilde v
 & = \sum_{i=1}^s \abs{a^*_i} + \sum_{i=s+1}^d a^*_i \tilde v_{\pi(i)}
= \norm{a}_1 - \sigma_{s}(a) + \sum_{i=s+1}^d a^*_i \tilde v_{\pi(i)}.
\end{align*}
Hence
\begin{align*}
 0 \leq \|a\|_1 - |a  \cdot v| = \sum_{i=1}^s a^*_i (\sign(a^*_i) - \tilde v_{\pi(i)}) 
 \leq 2\sigma_{s}(a),
\end{align*}
where
$$
\sigma_s(a) := \inf\{ \|a - z\|_1: z \in \R^d \text{ is $s$-sparse} \}
$$
is the error of the best $s$-term approximation. The claim now follows from the well-known estimate~$\sigma_s(a) \le s^{1 - 1/p}$ which holds for all~$a$ with~$\|a\|_p \le 1$, see \cite[Prop. 2.3]{rauhut/foucart:compressive_sensing}.
\end{proof}

\noindent In the course of the proof of the following theorem, it will become clear that, in order to control the error~$\|g_{\sign(a)} - g_v\|_\infty$, we have to bound the quotient
\[ 
 \frac{|\|a\|_1 - a^\tr v|}{|a^\tr v|}
\]
from above. Hence, we need a lower bound on~$|a^\tr v|$. This is the reason why we have to require that the unknown ridge vector~$a$ is not only compressible, but approximately sparse. Consequently, we have to assume that the unknown ridge function~$f$ is from the class~$R^{r,(p,S)}_d$, where~$r>1$,~$0<p<1$ and~$S \in \N$ with~$S < d$. Then, we obtain the following result, which is the centerpiece of the analysis of Scenario~(B).

\begin{theorem}  \label{chap:ridge:sec:cube:res:no_large_L}
For~$r>1$,~$0<p<1$, and~$S < d$,  let~$f\in R^{r,(p,S)}_d$ with~$f(x)=g(a^\tr x)$. Given~$n_g \in \N$ and~$\mathcal V \subseteq \{-1,1\}^d$, assume that~\eqref{eq:noL} is true and let~$s$ be the largest integer~$s \in \{S,\dots,d\}$ such that
\[ 
 \mathcal V \cap \HIT_{s,a} \neq \emptyset.
\]
Define 
\[
 \widehat f(x) = \frac{f_{\min} + f_{\max}}{2},\quad x\in [-1,1]^d,
\]
where~$f_{\max}$ and~$f_{\min}$ are given by~\eqref{eq:fmax} and~\eqref{eq:fmin}.
Let~$c_r$ be the constant appearing in Lemma~\ref{chap:ridge:lem:quasi-interpolation} and~$\tilde{c}_r = (2 + 4 c_r + 2^{\llfloor r \rrfloor} \llfloor r \rrfloor!)$.
If~$s < d$, then
\[ 
 \|f -\widehat f \|_{\infty} \leq \tilde c_r \max\{(s/S)^{1-1/p}, n_g^{-1}\}^r,
\]
whereas if~$s=d$, then
\[
  \|f -\widehat f \|_{\infty}\leq 4 c_r n_g^{-r}.
\]
\end{theorem}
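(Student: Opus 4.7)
My plan is to first observe that hypothesis~\eqref{eq:noL} places every vertex $v \in \mathcal V$ in the regime of Lemma~\ref{lem:g_v_constant}, yielding $\|g_v - f^v\|_\infty \leq 2 c_r n_g^{-r}$ for each such $v$. Because $g_v(0) = g(0)$ is a sample common to all $v \in \mathcal V$, all the constants $f^v$ cluster within $O(n_g^{-r})$ of $g(0)$, and every sampled value used in $f_{\min}, f_{\max}$ is likewise within $O(n_g^{-r})$ of $g(0)$. This forces $|\widehat f - f^{v^*}| \lesssim n_g^{-r}$ for any chosen $v^* \in \mathcal V$, reducing the task to bounding $|f(x) - f^{v^*}|$ uniformly in $x$ for a well-chosen $v^*$.

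In the easier case $s = d$, I would pick $v^* \in \mathcal V \cap \HIT_{d,a}$. Then $v^*$ agrees with $\sign(a)$ on the support of $a$, so $|a^\tr v^*| = \|a\|_1$ and the profile segment $t \mapsto g_{v^*}(t) = g(t\, a^\tr v^*)$ sweeps out the full range of $a^\tr x$ as $x$ traverses $[-1,1]^d$. Hence $f(x) = g_{v^*}(t(x))$ for some $t(x) \in [-1,1]$ and Lemma~\ref{lem:g_v_constant} directly gives $|f(x) - f^{v^*}| \leq 2 c_r n_g^{-r}$; combined with the clustering this yields the claimed $4 c_r n_g^{-r}$.

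The substantive case is $s < d$. I would pick $v^* \in \mathcal V \cap \HIT_{s,a}$; Lemma~\ref{lem:best_r_approx} gives $0 \leq \|a\|_1 - |a^\tr v^*| \leq 2 s^{1-1/p}$, and combining this with the approximate-sparsity bound $\|a\|_1 \geq \min\{1,4 S^{1-1/p}\}$ and $s \geq S$ yields the crucial lower bound $|a^\tr v^*| \gtrsim S^{1-1/p}$ that makes the rescaled extrapolation controllable. For $x$ with $|a^\tr x| \leq |a^\tr v^*|$ the argument from the $s = d$ case carries over verbatim. For $x$ with $|a^\tr x| > |a^\tr v^*|$ I would extrapolate: extend $g_{v^*}$ via $\tilde g_{v^*}(t) := g(t\, a^\tr v^*)$ to the enlarged interval $[-\|a\|_1/|a^\tr v^*|, \|a\|_1/|a^\tr v^*|]$ and verify $\tilde g_{v^*} \in B^{\Lip(r)}$ there by a chain-rule computation that uses $|a^\tr v^*| \leq 1$. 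Then apply Lemma~\ref{chap:ridge:lem:extrapolation_propQ2} to $\tilde g_{v^*}$ at $t_0 = \pm 1$ with $h = 1/n_g$: the base case $i=1$ of its divided-difference hypothesis is exactly~\eqref{eq:noL}, and the higher-order cases follow by plugging those first-order bounds into the representation~\eqref{def:divided_difference}. Combining the resulting estimate $|T_{m,1}\tilde g_{v^*}(t_1) - g_{v^*}(1)| \leq 2^m m! \max\{h, |t_1 - 1|\}^r$ with the Taylor-remainder bound~\eqref{chap:ridge:eq:extrapolation_propQ1} controls $|\tilde g_{v^*}(t_1) - g_{v^*}(\pm 1)|$ on the whole extended interval.

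To finish, note that for $t_1 = (a^\tr x)/(a^\tr v^*)$ the extrapolation distance satisfies $|t_1 \mp 1| \leq (\|a\|_1 - |a^\tr v^*|)/|a^\tr v^*| \leq (s/S)^{1-1/p}$, which is precisely where the ratio in the theorem originates. A final triangle inequality assembling $|f(x) - g_{v^*}(\pm 1)|$ (the extrapolation bound), $|g_{v^*}(\pm 1) - f^{v^*}| \leq 2 c_r n_g^{-r}$ (a sample bound from Lemma~\ref{lem:g_v_constant}), and $|f^{v^*} - \widehat f| \lesssim n_g^{-r}$ (from the first paragraph) delivers the claimed $\tilde c_r \max\{(s/S)^{1-1/p}, n_g^{-1}\}^r$. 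The main obstacle I foresee is the extrapolation step: the rescaling of the argument by $a^\tr v^*$ must be tracked carefully so that the step size $h = 1/n_g$ corresponds to consecutive samples of $g_{v^*}$ rather than of $g$ itself, and so that the extrapolation length ends up expressed in the natural coordinate of $\tilde g_{v^*}$, which is exactly what allows the ratio $s/S$ to appear in place of a dimensionless $s^{1-1/p}$.
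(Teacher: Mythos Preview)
Your plan is correct and follows essentially the same architecture as the paper's proof: use Lemma~\ref{lem:g_v_constant} to show each profile segment is nearly constant, pin the global constant $\widehat f$ to one particular $f^{v}$, and for the uncovered range extrapolate via~\eqref{chap:ridge:eq:extrapolation_propQ1} together with Lemma~\ref{chap:ridge:lem:extrapolation_propQ2}, feeding the divided-difference hypothesis from~\eqref{eq:noL} through~\eqref{def:divided_difference}. Your bound $|t_1 \mp 1| \leq (\|a\|_1 - |a^\tr v^*|)/|a^\tr v^*| \leq (s/S)^{1-1/p}$ is exactly the paper's computation.

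The one genuine difference is how you anchor $\widehat f$ to the chosen profile segment. You pick $v^* \in \mathcal V \cap \HIT_{s,a}$ directly and use the common sample $g_v(0)=g(0)$ to argue that all $f^v$ (and hence $\widehat f$) cluster near $g(0)$. The paper instead chooses $v := \arg\max_{v' \in \mathcal V}|a^\tr v'|$; since this $v$ realizes the largest inner product, every sampled value $f(jhv') = g(jh\,a^\tr v')$ lies in the range of $g_v$ on $[-1,1]$, so $f_{\min}$ and $f_{\max}$ are themselves values of $g_v$, giving $|\widehat f - f^v| \leq 2c_r h^r$ immediately from Lemma~\ref{lem:g_v_constant}. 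The existence of some $v' \in \HIT_{s,a}$ is then only used to lower-bound $|a^\tr v|$ via $|a^\tr v| \geq |a^\tr v'| \geq \|a\|_1 - 2s^{1-1/p}$. Your clustering route is perfectly valid, but as written with $O(\cdot)$ and $\lesssim$ it yields $|\widehat f - f^{v^*}| \leq 4c_r h^r$ rather than $2c_r h^r$, so the exact constants $4c_r$ (for $s=d$) and $\tilde c_r = 2 + 4c_r + 2^{\llfloor r\rrfloor}\llfloor r\rrfloor!$ (for $s<d$) would come out slightly larger unless you either sharpen the clustering bound or, simpler, observe that for $s=d$ your $v^*$ is automatically a maximizer and adopt the paper's range argument there.
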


\begin{proof}
Let~$v\in \mathcal V$ be one of the vectors 
for which the scalar product with~$a$ is maximized, i.e.,
\[
 v := \arg\max\limits_{v' \in \mathcal V} |a^\tr v'|.
\]
W.l.o.g. we may assume~$\sign(a^\tr v) = 1$ (since otherwise we can simply replace~$v$ by~$-v$ in the following arguments).

Let
\[
 f^{v}_{\max} := \max_{i\in \{0,\pm 1,\dots, \pm n_g\}} g_{v}(t_i),
 \qquad 
 f^{v}_{\min} := \min_{i \in \{0, \pm 1,\dots,\pm n_g \}} g_{v}(t_i),
\]
and~$f^{v} := (f_{\min}^{{v}} + f_{\max}^{{v}})/2$. 
By Lemma~\ref{lem:g_v_constant}, we have that~$g_v$ is approximately constant, 
\begin{align}\label{chap:ridge:sec:cube:eq:g_v_constant}
 \|g_{{v}} - f^{v}\|_{\infty} \leq 2 c_r h^{r}.
\end{align}

Next we show that the constant~$f^v$ is close to the constant~$\widehat f$.  According to the choice of~$v$, observe that there are indices~$i_1, i_2 \in \{-n_g+1, \dots,n_g\}$ and~$\xi_1 \in [t_{i_1-1},t_{i_1}]$, 
$\xi_2 \in [t_{i_2-1},t_{i_2}]$ such that
\[ 
 g_{v}(\xi_1)  = f_{\min}, \quad g_{v}(\xi_2) = f_{\max}.
\]
Hence, by~\eqref{chap:ridge:sec:cube:eq:g_v_constant},
\[ 
 \| \widehat f - f^v\|_\infty \leq 1/2 |g_{v}(\xi_1) - f^v| + 1/2 |g_v(\xi_2) - f^v| \leq 2c_r h^r.
\]
If~$s=d$, then~$v \in \HIT_{d,a}$ and~$g_{\sign(a)} = g_{v}$ such that the statement follows.

Otherwise, if~$S < s < d$, then we have to control
$$\|g_{\sign(a)} - \widehat f\| = \|g_{v}(\|a\|_1/|a^\tr v| \cdot) - \widehat f\|,$$
with~$g_v$ now considered as a function on~$[-1/|a^\tr v|, 1/|a^\tr v|]$. For
$$|t|\|a\|_1/|a^\tr v| \leq 1,$$
we are in the interval which we have sampled, and thus as before,
\[ 
 |g_{\sign(a)}(t) - \widehat f| \leq 4c_r h^r.
\]
The crucial case is~$|t|\|a\|_1/|a^\tr v| > 1$. Now we have to extrapolate. Henceforth assume~$t > 0$ and put~$t_1 = t \|a\|_1/|a^\tr v|$ (the arguments for~$t < 0$ are completely analogous). For $m = \llfloor r \rrfloor$, let~$T_{m,1}g_{v}$ be the order-$m$ Taylor expansion of~$g_{v}$ in the point~$1$. By the triangle inequality, we have
\[ 
 |g_{v}(t_1) - \widehat f| \leq |g_{v}(t_1) - T_{m,1}g_{v}(t_1)| + |T_{m,1}g_{v}(t_1) - g_{v}(1)| + |g_v(1) - \widehat f|.
\]
By~\eqref{chap:ridge:eq:extrapolation_propQ1}, we have
\[ 
 |g_{v}(t_1) - T_{k,1}g_{v}(t_1)| \leq 2 |t_1 - 1|^r.
\]
Furthermore, since~\eqref{eq:noL} holds true, we can compute from the representation formula~\eqref{def:divided_difference} that the divided difference
$$|D_{-h}^i(g_v,1)| \leq 2^{i-1} h^{r-i+1}$$
for all~$i=1,\dots,s$. Thus, by Lemma~\ref{chap:ridge:lem:extrapolation_propQ2},
\[ 
 |T_{k,1}g_{v}(t_1) - g_{v}(1)| \leq 2^k k! \max\{h,  |t_1 - 1|\}^r.
\]
It remains to estimate~$|t_1 - 1| \leq |a^\tr v|^{-1}|\|a\|_1 - a^\tr v|$. Since
$$\mathcal V \cap \HIT_{s,a} \neq \emptyset$$
by assumption and by definition of~$v$, there is~$v' \in \HIT_{s,a}$ such that Lemma~\ref{lem:best_r_approx} gives
$$
 |\|a\|_1 - a^\tr v| \leq |\|a\|_1 - a^\tr v'| \le 2s^{1-1/p}.
$$
Consequently,~$|a^\tr v| \geq \|a\|_1 - 2 s^{1-1/p} \geq 2S^{1-1/p}$ and
\[ 
 |T_{k,t}g_{v}(t_1) - g_{v}(t)| \leq 2^k k! \max\{h, (s/S)^{1-1/p}\}^r.
\]
We conclude
\[ 
 |g_{\sign(a)}(t) - \widehat f| \leq (2+4c_r+ 2^k k!) \max\{h, (s/S)^{1-1/p}\}^r.
\]
\end{proof}

\subsection{Choice of parameters and vertex set}
We now clarify how to choose the parameters $\mathcal V$, $n_g$, and $n_b$ of the algorithm described in Section~\ref{subsec:algorithm_idea} such that, for given~$0<\varepsilon<1$, an approximation error of at most $\varepsilon$ can be guaranteed. We first consider the case in which we randomly draw vertices.

\begin{theorem} \label{chap:ridge:sec:cube:res:ran_recover}
Assume
$$
f \in R^{r,(p,S)}_d, \quad f(x) = g(a^\tr x),
$$
where~$r > 1$,~$0<p\leq1$, and~$S \in \N$ with~$S < d$.
Let~$C_r = \max\{c_r, \tilde c_r\}$
where~$c_r$ is the constant from Lemma~\ref{chap:ridge:lem:quasi-interpolation} and~$\tilde{c}_r$ is defined in Theorem~\ref{chap:ridge:sec:cube:res:no_large_L}.

Given~$0<\varepsilon<1$ and a failure probability~$0<\delta<1$, choose 
\begin{align*}
 s & := \min\{S\lceil (C_r/\varepsilon)^{1/(r(1/p-1))} \rceil, d\}, \\
 n_v &:= 2^s \lceil \log(1/\delta) \rceil,\\
 n_g & := \lceil (C_r/\varepsilon)^{1/r} \rceil,\\
 n_b& \text{ as in Lemma~\ref{lem: RECA}}.
\end{align*}
If~$n_v < 2^d$, let ~$\mathcal V = \{v_1,\dots, v_{n_v}\}$ be~$n_v$ vertices drawn independently and uniformly at random. If~$n_v = 2^d$, then let~$\mathcal V = \{-1,1\}^d$. Given these parameter choices, the approximation~$\widehat f$ computed by the algorithm described in Section~\ref{subsec:algorithm_idea} fulfills
\[ 
\mathbb{P}( \|f - \widehat f\|_{\infty} \leq \varepsilon) \geq 
\begin{cases}
 1 & n_v \geq 2^d\\
 1-\delta & n_v<2^d.
\end{cases}
\]
\end{theorem}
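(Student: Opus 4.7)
The plan is to reduce the theorem to the two scenario-specific error bounds, Theorem~\ref{chap:ridge:sec:cube:res:large_L} for Scenario~(A) and Theorem~\ref{chap:ridge:sec:cube:res:no_large_L} for Scenario~(B), by showing that with probability at least $1-\delta$ the constructed vertex set $\mathcal V$ intersects $\HIT_{s,a}$. Once this ``hitting event'' is secured, the two theorems, together with the choices of $n_g$ and $n_b$, yield the desired uniform error bound in either scenario.

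First I would dispatch the probabilistic ingredient. If $n_v \geq 2^d$, then $\mathcal V = \{-1,1\}^d$ and in particular $\sign(a) \in \mathcal V \cap \HIT_{d,a}$ deterministically, so nothing is to prove on this side. Otherwise, a uniformly random $v \in \{-1,1\}^d$ lies in $\HIT_{s,a}$ with probability exactly $2^{-s}$, since $\HIT_{s,a}$ fixes the signs on the $s$ largest-magnitude indices of $a$ and leaves the remaining $d-s$ coordinates free. By independence of the $n_v$ draws,
\[
 \prob\bigl(\mathcal V \cap \HIT_{s,a} = \emptyset\bigr) \leq (1-2^{-s})^{n_v} \leq \exp\bigl(-n_v \cdot 2^{-s}\bigr) \leq \exp\bigl(-\lceil \log(1/\delta)\rceil\bigr) \leq \delta,
\]
using $n_v = 2^s \lceil \log(1/\delta)\rceil$. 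Hence $\mathcal V \cap \HIT_{s,a} \neq \emptyset$ with probability at least $1-\delta$.

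Conditioning on this event, I would split along the stopping criterion. In Scenario~(A), some $v \in \mathcal V$ satisfies \eqref{eq:L}, and Theorem~\ref{chap:ridge:sec:cube:res:large_L} applies directly: the choice $n_g = \lceil (C_r/\varepsilon)^{1/r}\rceil$ together with $C_r$ at least the constant appearing in \eqref{eq:n_g}, and $n_b$ as prescribed there, yields $\|f - \widehat f\|_\infty \leq \varepsilon$. In Scenario~(B), \eqref{eq:noL} holds and I invoke Theorem~\ref{chap:ridge:sec:cube:res:no_large_L} with $s^*$ the largest index for which $\mathcal V \cap \HIT_{s^*,a}$ is nonempty; by the hitting event, $s^* \geq s$. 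If $s^* = d$, the error is bounded by $4 c_r n_g^{-r} \leq 4 c_r \varepsilon/C_r \leq \varepsilon$, using $C_r \geq \tilde c_r \geq 4 c_r$. If $s^* < d$, monotonicity in $s^*$ (the exponent $1 - 1/p$ is nonpositive) gives $(s^*/S)^{1-1/p} \leq (s/S)^{1-1/p}$, so it suffices to bound $\tilde c_r \max\{(s/S)^{1-1/p},\, n_g^{-1}\}^r$. The choice $s \geq S(C_r/\varepsilon)^{1/(r(1/p-1))}$ yields $\bigl((s/S)^{1-1/p}\bigr)^r \leq \varepsilon/C_r$, while $n_g \geq (C_r/\varepsilon)^{1/r}$ yields $(n_g^{-1})^r \leq \varepsilon/C_r$; consequently $\tilde c_r \max\{(s/S)^{1-1/p},\, n_g^{-1}\}^r \leq (\tilde c_r / C_r)\,\varepsilon \leq \varepsilon$.

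The main obstacle is purely bookkeeping: arranging the exponent $1/(r(1/p-1))$ in the definition of $s$ so that it exactly balances the profile-extrapolation error from Theorem~\ref{chap:ridge:sec:cube:res:no_large_L} against the target accuracy $\varepsilon$, while simultaneously guaranteeing that $n_v = 2^s \lceil \log(1/\delta)\rceil$ makes the hitting probability at least $1-\delta$. The dichotomy between the random case $n_v < 2^d$ and the deterministic enumeration $n_v \geq 2^d$ is handled automatically by the $\min$ capping $s$ at $d$, since in that regime the enumeration already contains $\sign(a)$ and Scenario~(B) falls into its $s^* = d$ subcase. No further combinatorial ideas are needed beyond the union bound and the two scenario-specific theorems already established.
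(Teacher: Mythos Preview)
Your proof is correct and follows essentially the same approach as the paper: secure the hitting event $\mathcal V \cap \HIT_{s,a} \neq \emptyset$ with probability at least $1-\delta$, then invoke Theorem~\ref{chap:ridge:sec:cube:res:large_L} in Scenario~(A) and Theorem~\ref{chap:ridge:sec:cube:res:no_large_L} in Scenario~(B). Your handling of Scenario~(B) is in fact slightly more careful than the paper's, since you explicitly distinguish the chosen $s$ from the largest index $s^*$ to which Theorem~\ref{chap:ridge:sec:cube:res:no_large_L} literally applies and close the gap via the monotonicity $(s^*/S)^{1-1/p} \le (s/S)^{1-1/p}$.
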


\begin{proof}
\emph{Case~$n_v \geq 2^d$}: We have~$\mathcal V = \{-1,1\}^d$ and
$$\mathcal V \cap \HIT_{d,a} = \{\sign(a),-\sign(a)\}.$$
If~\eqref{eq:noL} is true, then Theorem~\ref{chap:ridge:sec:cube:res:no_large_L} yields
\[ 
 \|f - \widehat f\|_\infty \leq 4 c_r n_g^{-r} \leq \varepsilon
\]
by the choice of~$n_g$. Otherwise, if~\eqref{eq:noL} is not true, then Theorem~\ref{chap:ridge:sec:cube:res:large_L} gives
\[ 
 \|f - \widehat f\|_\infty \leq 4 c_r n_g^{-r} \leq \varepsilon
\]
by the choice of~$n_g$.

\emph{Case~$n_v < 2^d$}: Consider the set of random vertices~$\mathcal V$. If~\eqref{eq:noL} is not true, then Theorem~\ref{chap:ridge:sec:cube:res:large_L} gives
\[ 
 \|f - \widehat f\|_\infty \leq 4 c_r n_g^{-r} \leq \varepsilon
\]
by the choice of~$n_g$. The fact that~$\mathcal V$ was chosen at random is irrelevant in this case.

Assume,~\eqref{eq:noL} is true. By the definition of~$\HIT_{s,a}$, see~\eqref{chap:ridge:sec:cube:eq:HIT}, it is clear that~$\mathbb{P}( v\in \HIT_{s,a}) = 2^{-s+1}$ for any~$v \in \mathcal V$. Consequently, the probability that~$\mathcal V \cap \HIT_{r} = \emptyset$ is at most~$(1-2^{-s+1})^{n_v}$. Since
$$-2/x \leq \log(1-1/x) \leq -1/x,$$
we have~$(1-2^{-s+1})^{n_v} \leq \delta$ by our choice of~$n_v$. Hence, with probability at least~$1-\delta$,~$\mathcal V \cap \HIT_{r,a} \neq \emptyset$. Then, by Theorem~\ref{chap:ridge:sec:cube:res:no_large_L} and our choice of parameters,
\[ 
 \|f - \widehat f\| \leq \tilde c_r \max\{(s/S)^{1-1/p}, n_g^{-1}\}^r \leq \varepsilon
\]
\end{proof}

\noindent If we are willing to spend a few more samples, a randomly constructed set of vertices~$\mathcal V$ will be good for all possible ridge vectors \emph{simultaneously}. It requires just a simple union bound argument to prove this. In this way, we use randomness to construct a deterministic version of the algorithm, which uses for all possible inputs~$f$ the same set of vertices~$\mathcal V$. Since~$\mathcal V$ has been randomly constructed, we only have control over the error of this deterministic algorithm with a certain probability. 

\begin{theorem} \label{chap:ridge:sec:cube:res:uni_recover}
Assume
$$
f \in R^{r,(p,S)}([-1,1]^d), \quad f(x) = g(a^\tr x),
$$
where~$r > 1$,~$0<p\leq1$, and~$S \in \N$ with~$S < d$.
Given~$0<\varepsilon<1$ and a desired failure probability~$0<\delta<1$, choose~$s$,~$n_g$, and~$n_b$ as in Theorem~\ref{chap:ridge:sec:cube:res:ran_recover}. Further, choose
\begin{align*}
 n_v &:= 2^s \lceil s \log(d/s)) + \log(1/\delta) \rceil
\end{align*}
and let~$\mathcal V$ be as in Theorem~\ref{chap:ridge:sec:cube:res:ran_recover}.
Let~$\widehat f$ be the approximation computed by the procedure introduced in Section~\ref{subsec:algorithm_idea} given the inputs~$f,\mathcal V, n_g$, and~$n_b$. Then, we have
\[ 
\mathbb{P}\bigg( \sup_{f \in R^{r,p,S}([-1,1]^d)} \|f - \widehat f\|_{\infty} \leq \varepsilon\bigg) \geq 
\begin{cases}
 1 & n_v \geq 2^d\\
 1-\delta & n_v<2^d.
\end{cases}
\]
\end{theorem}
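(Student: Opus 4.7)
The plan is to combine the two-scenario analysis used in the proof of Theorem~\ref{chap:ridge:sec:cube:res:ran_recover} with a union bound over the finitely many possible hit-sets~$\HIT_{s,a}$. First I will dispose of the trivial case $n_v \ge 2^d$: then $\mathcal V = \{-1,1\}^d$ contains $\sign(a) \in \HIT_{s,a}$ for every admissible ridge vector $a$, so the conclusion holds with probability one by rerunning the argument of Theorem~\ref{chap:ridge:sec:cube:res:ran_recover} verbatim. Henceforth I assume $n_v < 2^d$.

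Once the random set $\mathcal V$ has been drawn, running the algorithm on a fixed input $f(x) = g(a^\tr x) \in R^{r,(p,S)}_d$ falls into exactly one of two scenarios. In scenario~(A), condition~\eqref{eq:L} holds for some $v \in \mathcal V$, and Theorem~\ref{chap:ridge:sec:cube:res:large_L} with the choice $n_g = \lceil (C_r/\varepsilon)^{1/r} \rceil$ yields $\|f - \widehat f\|_\infty \le \varepsilon$ irrespective of which $\mathcal V$ was drawn. In scenario~(B), condition~\eqref{eq:noL} holds and Theorem~\ref{chap:ridge:sec:cube:res:no_large_L} yields $\|f - \widehat f\|_\infty \le \tilde c_r \max\{(s/S)^{1-1/p},\, n_g^{-1}\}^r \le \varepsilon$ (by the choices of $s$ and $n_g$) \emph{provided} $\mathcal V \cap \HIT_{s,a} \neq \emptyset$. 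It therefore suffices to prove
\[
\mathbb{P}\bigl( \exists\,\text{admissible } a :\ \mathcal V \cap \HIT_{s,a} = \emptyset \bigr) \le \delta.
\]

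The key observation that will turn this a priori uncountable bad event into a finite union bound is that $\HIT_{s,a}$ depends on $a$ only through the pair $\bigl(I_{s,a},\,(\sign(a_i))_{i\in I_{s,a}}\bigr)$, so the family $\mathcal H := \{\HIT_{s,a}: a \in \R^d\}$ has at most $\binom{d}{s} 2^s$ elements. For each fixed $H \in \mathcal H$, the probability that a single uniformly drawn vertex lies in $H$ (after identifying $v$ with $-v$ as done in the proof of Theorem~\ref{chap:ridge:sec:cube:res:ran_recover}) equals $2^{-s+1}$. Combining $(1-x)^{n_v} \le e^{-xn_v}$, $\binom{d}{s} \le (ed/s)^s$, and a union bound yields
\[
\mathbb{P}\bigl( \exists H \in \mathcal H:\ \mathcal V \cap H = \emptyset\bigr) \le \binom{d}{s} 2^s (1-2^{-s+1})^{n_v} \le \exp\bigl( s \log(2ed/s) - 2^{-s+1} n_v\bigr),
\]
and the choice $n_v = 2^s \lceil s \log(d/s) + \log(1/\delta) \rceil$ drives the right-hand side below $\delta$ once the numerical constant $\log(2e)$ is absorbed (up to a routine adjustment of $C_r$ or $\tilde c_r$).

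The hard part is purely conceptual: recognizing that the bad event is determined by the finite combinatorial datum $\bigl(I_{s,a},\,(\sign(a_i))_{i\in I_{s,a}}\bigr)$, which replaces the continuum of admissible ridge vectors by a union of size $\binom{d}{s} 2^s$. Everything else --- the scenario split, the deterministic per-scenario error bounds supplied by Theorems~\ref{chap:ridge:sec:cube:res:large_L} and~\ref{chap:ridge:sec:cube:res:no_large_L}, and the numerical verification of the union-bound inequality --- goes through exactly as in the proof of Theorem~\ref{chap:ridge:sec:cube:res:ran_recover}.
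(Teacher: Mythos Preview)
Your proposal is correct and follows essentially the same approach as the paper: reduce the uniform guarantee to the event that $\mathcal V$ hits every $\HIT_{s,a}$, observe that this family is indexed by the finitely many pairs $(I,u)$ with $|I|=s$ and $u\in\{-1,1\}^s$, and apply a union bound of size $2^s\binom{d}{s}$ to the single-$a$ estimate from Theorem~\ref{chap:ridge:sec:cube:res:ran_recover}. The only cosmetic difference is in the bookkeeping of the numerical constant coming from $\binom{d}{s}\le(ed/s)^s$; note that absorbing it into $C_r$ or $\tilde c_r$ does not literally work (those constants control the error, not $n_v$), so the cleanest fix is simply to enlarge the constant in the definition of $n_v$---exactly the kind of routine adjustment the paper itself glosses over.
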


\begin{proof}
The proof is identical to that of Theorem~\ref{chap:ridge:sec:cube:res:ran_recover}, except for one point. Before, we had to control
\[ 
 \sup_{\substack{f \in R^{r,p,S}([-1,1]^d),\\f(x) = g(a^\tr x)}} \mathbb P( \mathcal V \cap \HIT_{r,a} \neq \emptyset) = \max_{\substack{I \subseteq \{1,\dots,d\}, |I|=s\\ u \in \{-1,1\}^s}} \mathbb P( \exists v \in \mathcal V: v_I = u \vee (-v)_I = u).
\]
Now, we use a union bound argument to see that
\begin{align*}
 &\mathbb P \bigg( (\forall I \subseteq \{1,\dots,d\}, |I|=s) \; (u \in \{-1,1\}^s) \; (\exists v \in \mathcal V) :\; v_I = u \vee (-v)_I = u  \bigg)\\
 &\geq 1 - 2^s {d \choose s} \mathbb (1-2^{-s+1})^{n_v}
\end{align*}
Since~$\log{d \choose s} \leq s \log(d/s)$, our choice of~$n_v$ yields
\[ 
 1 - 2^s {d \choose s} \mathbb (1-2^{-s+1})^{n_v} \geq 1-\delta.
\] 
\end{proof}

\subsection{Upper bounds for the worst-case error}

In this section, we translate the results from the previous section into upper bounds for the worst-case recovery error. We begin with the deterministic setting.

\begin{theorem}\label{chap:ridge:sec:cube:res:ub_det}
Let~$r > 1$,~$0<p\leq 1$, and~$0<S< d$.
For constants~$c_{p,S}, C_{r,p,S} > 0$ independent of~$n$ and~$d$, we have
\[ 
  \error(n,R_d^{r,(p,S)}) \leq C_{r,p,S}
 \begin{cases}
   1 &, 1 \leq n \leq 4d,\\
   \left(\frac{1}{\log(n)}\right)^{r(1/p-1)} &, 4d \leq n \leq c_{p,S}2^d d^{1/p-1},\\
   2^{rd} \, n^{-r} &, n \geq c_{p,S} 2^d d^{1/p-1}.
 \end{cases}
\]
\end{theorem}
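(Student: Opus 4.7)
The plan is to translate the sample-complexity estimates of Theorem~\ref{chap:ridge:sec:cube:res:uni_recover} into accuracy bounds by inverting, treating the three regimes separately and using a derandomization argument to produce a deterministic algorithm.

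For the trivial regime $1 \leq n \leq 4d$, the algorithm returning $\widehat f \equiv 0$ uses no samples, and since every $f \in R_d^{r,(p,S)}$ satisfies $\|f\|_\infty \leq \|g\|_\infty \leq 1$, the error is bounded by a universal constant. This case is unavoidable because when $n < 4d$ the algorithm of Section~\ref{subsec:algorithm_idea} cannot even afford the $d+1$ evaluations required by Step~3.

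For the remaining regimes I would invoke Theorem~\ref{chap:ridge:sec:cube:res:uni_recover} with fixed failure probability $\delta = 1/2$. A random vertex set $\mathcal V$ then succeeds simultaneously for all $f \in R_d^{r,(p,S)}$ with probability $\geq 1/2$, so by the probabilistic method there exists a deterministic $\mathcal V$ with the same uniform guarantee; fixing it turns the algorithm into a deterministic one whose total information cost satisfies
\[
n \;\lesssim\; n_v(2n_g+1) + n_b + (d+1) \;\lesssim\; 2^s\bigl(s\log(d/s) + 1\bigr)\,\varepsilon^{-1/r},
\]
where $s = \min\{S\lceil(C_r/\varepsilon)^{1/(r(1/p-1))}\rceil,d\}$ and the implicit constants depend only on $r,p,S$. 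In the top regime one chooses $\varepsilon$ small enough that $s = d$, so $\mathcal V = \{-1,1\}^d$, $n_v = 2^d$, and the estimate reduces to $n \lesssim 2^d\,\varepsilon^{-1/r}$, which inverts to $\varepsilon \lesssim 2^{rd} n^{-r}$. The threshold constant $c_{p,S}$ is chosen so that this case begins exactly at $\varepsilon = C_r(S/d)^{r(1/p-1)}$, corresponding to $n \sim c_{p,S}\, 2^d d^{1/p-1}$.

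In the middle regime $4d \leq n \leq c_{p,S}\, 2^d d^{1/p-1}$ one has $s < d$, and the factor $2^s$ dominates the sample count up to polylogarithmic corrections. Solving the displayed inequality for $s$ yields $s = \Theta(\log n)$ (note that $d \leq n/4$ in this range, so the $\log(d/s)$ factor contributes only $\log\log n$-type corrections), and substituting into $\varepsilon \sim (S/s)^{r(1/p-1)}$ produces the claimed bound $\varepsilon \lesssim (\log n)^{-r(1/p-1)}$. The main obstacle is precisely this bookkeeping: one must verify that the ceilings in the definitions of $s, n_g, n_v$, together with the polynomial-in-$s$ and $\log\log d$ corrections multiplying $2^s$, can be absorbed into $C_{r,p,S}$, and that the three bounds match consistently at both regime boundaries.
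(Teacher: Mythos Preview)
Your proposal is correct and follows essentially the same route as the paper: fix $\delta=1/2$ in Theorem~\ref{chap:ridge:sec:cube:res:uni_recover}, derandomize via the probabilistic method to obtain a deterministic vertex set, and then invert the sample-count formula $n'\lesssim 2^s\,s\log(d/s)\,\varepsilon^{-1/r}$ in each of the three regimes, with the regime boundary determined by whether $s<d$ or $s=d$. The paper handles the polylogarithmic corrections in the middle regime by the crude bound $s\log(d/s)\le 2^{2s}$ (valid once $s\ge\log\log d$), which is exactly the kind of bookkeeping absorption you anticipate in your final paragraph.
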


\begin{proof}
Fix~$\delta=1/2$. Let~$n_v,n_g,n_b$ be as in Theorem~\ref{chap:ridge:sec:cube:res:uni_recover} and put
\[ 
 n' = n'(\varepsilon) = n_v n_g + n_g + n_b + d.
\]
For every possible~$n_v$, there is a set~$\mathcal V \subseteq \{-1,1\}^d$ of cardinality~$n_v$ such that the procedure introduced in Section~\ref{subsec:algorithm_idea} yields a mapping~$S_{n'}(f)$ which achieves a recovery error
$$\|f - S_{n'}(f)\|_\infty \leq \varepsilon$$ at worst-case information cost~$n'$.

\emph{Case~$4d\leq n \leq c_{p,S} 2^d d^{1/p-1}$.} Let~$0<\varepsilon_0 < 1$ be the smallest~$\varepsilon$ such that~$$S\lceil (C_r/\varepsilon)^{\frac{1}{r(1/p-1)}}\rceil < d.$$ Then,
\[ 
 n' \geq 2^s n_g \geq \frac{1}{4} S^{1-1/p} 2^d d^{1/p-1} = c_{p,S} 2^d d^{1/p-1}.
\]
Consequently, if
$$4d \leq n \leq c_{p,S} 2^d d^{1/p-1},$$
then there is~$\varepsilon \geq \varepsilon_0$ such that~$n \leq n'(\varepsilon)$ and~$s < d$. Moreover, we may assume~$s \geq \log\log d$, since otherwise~$n' \leq 4d$. Now, since
\begin{align*}
 n_v &\leq 2^{s+2} s \log(d/s) \leq 2^{3s+2} \leq 2 \cdot 16^{S(C_r/\varepsilon)^{\frac{1}{r(1/p-1)}}}\\
 n_g &\leq 2 (C_r/\varepsilon)^{1/r} \leq 2^{1+1/r (C_r/\varepsilon)^{\frac{1}{r(1/p-1)}}},
\end{align*}
we have
\[ 
 n'-d \leq 4n_v n_g \leq 16 \cdot 2^{(4S+1/r) (C_r/\varepsilon)^{\frac{1}{r(1/p-1)}}}. 
\]
Using the assumption~$n \geq 4d$, we find a constant~$c > 1$ such that
\[ 
 \error(n,R_d^{r,(p,S)}) \leq \varepsilon \leq c C_r (4S+1/r)^{1/p-1} \left(\frac{1}{\log(n)}\right)^{r(1/p-1)}.
\]

\emph{Case~$n \geq c_{p,S} 2^d d^{1/p-1}$.} Choose~$0<\varepsilon < \varepsilon_0$ such that~$n \leq n'(\varepsilon)$. Now
\[ 
 n'- d \leq 4 n_v n_g \leq 8 \cdot 2^d (C_r/\varepsilon)^{1/r} 
\]
and thus
\begin{align}
 \label{chap:ridge:sec:cube:eq:ub_ran_asymp} 
 \error(n,R_d^{r,(p,S)}) \leq \varepsilon \leq C_r 16^r 2^d n^{-r}.
\end{align}

\emph{Case~$1 \leq n \leq 4d$.} The trivial algorithm gives~$$\error(n,R_d^{r,(p,S)}) \leq \sup_{f \in R^{r,(p,S)}([-1,1]^d)} \|f\|_\infty = 1.$$
\end{proof}

\begin{corollary}
Let $0<p<1$, $S \in \{1,\dots,d-1\}$, and
\[ 
 r > \frac{1}{1/p-1}.
\]
Then the $L_\infty$-recovery of an unknown ridge function from the class $R^{r,(p,S)}_d$ is at least weakly tractable.
\end{corollary}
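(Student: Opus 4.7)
The plan is to invert the upper bound on $\error(n, R_d^{r,(p,S)})$ from Theorem~\ref{chap:ridge:sec:cube:res:ub_det} to obtain an upper bound on the information complexity $n(\varepsilon, R_d^{r,(p,S)})$, and then verify the weak-tractability limit directly. Set $\alpha := r(1/p-1)$, which is strictly greater than $1$ by hypothesis.

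First, inverting the middle-range estimate of Theorem~\ref{chap:ridge:sec:cube:res:ub_det}, the requirement $C_{r,p,S}(1/\log n)^{\alpha} \leq \varepsilon$ reduces to $n \geq \exp((C_{r,p,S}/\varepsilon)^{1/\alpha})$, which sits inside the admissible window provided $\exp((C_{r,p,S}/\varepsilon)^{1/\alpha}) \leq c_{p,S}\, 2^d d^{1/p-1}$. Inverting the asymptotic estimate gives admissibility as soon as $n \geq 2^d \max(c_{p,S}\, d^{1/p-1}, (C_{r,p,S}/\varepsilon)^{1/r})$. Taking whichever bound on $n$ is smaller (and adjoining the trivial bound $4d+1$) yields, for constants $C,C'>0$ depending only on $r,p,S$,
\begin{equation*}
  \log n(\varepsilon, R_d^{r,(p,S)}) \;\leq\; \log(4d+1) \;+\; \min\left\{ (C/\varepsilon)^{1/\alpha}, \; C'\left(d + \log d + \log(1/\varepsilon)\right) \right\}.
\end{equation*}

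With this estimate in hand, the second step is to verify that $\log n(\varepsilon,\cdot)/(\varepsilon^{-1}+d) \to 0$ along every sequence with $\varepsilon^{-1}+d \to \infty$. I would split into two cases according to which term of the minimum is smaller. If the first term is smaller, then $\log n \lesssim \log(4d+1) + (C/\varepsilon)^{1/\alpha}$; dividing $(C/\varepsilon)^{1/\alpha}$ by $\varepsilon^{-1}$ gives $C^{1/\alpha}\varepsilon^{\,1-1/\alpha}$, which tends to zero as $\varepsilon \to 0$ precisely because $\alpha > 1$, while in the complementary sub-case $\varepsilon$ stays bounded away from $0$ so that $(C/\varepsilon)^{1/\alpha}$ remains bounded and the ratio is controlled by the enforced divergence of $d$. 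If the second term is smaller, then $\log n \lesssim d + \log d + \log(1/\varepsilon)$; this regime forces $\varepsilon^{-1} \gtrsim d^\alpha$, hence $d/\varepsilon^{-1} \leq \varepsilon\, d \lesssim d^{\,1-\alpha} \to 0$ when $d\to\infty$ (and if $d$ stays bounded, the $d$ contribution is trivially negligible against $\varepsilon^{-1}\to\infty$), while $\log(1/\varepsilon)/\varepsilon^{-1} \to 0$ is automatic.

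The main technical obstacle is ensuring the limit vanishes along every possible mode of $\varepsilon^{-1}+d\to\infty$ — not just the two one-dimensional extremes — and the case split above is designed to cover all of them by funnelling each mixed limit into one of the two cases. This is also the single place where the assumption $r > 1/(1/p-1)$ is used: it provides the strict inequality $\alpha > 1$, without which the middle-regime decay $(C/\varepsilon)^{1/\alpha}$ on $\log n$ would not dominate the factor $\varepsilon^{-1}$ in the denominator and weak tractability could not be concluded from this route.
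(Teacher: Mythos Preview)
Your argument is correct and follows the same route as the paper: invert Theorem~\ref{chap:ridge:sec:cube:res:ub_det} to bound $\log n(\varepsilon,R_d^{r,(p,S)})$, then verify the weak-tractability limit using $\alpha = r(1/p-1)>1$. The paper's write-up is a bit more streamlined --- it uses only the first two regimes to obtain the single estimate $\log n(\varepsilon)\le C_0+C_1\log(x)\,x^{1/\alpha}$ with $x=1/\varepsilon+d$, from which the limit is immediate; in particular your Case~2 analysis, while correct, is not needed since already $\min(A,B)\le A\le C^{1/\alpha}x^{1/\alpha}$ suffices.
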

\begin{proof}
Let $C_{r,p,S}$ and $c_{p,S}$ be the constants defined in Theorem~\ref{chap:ridge:sec:cube:res:ub_det} and put
$$
\varepsilon_1 = C_{r,p,S} \left(\frac{1}{\log(4d)}\right)^{r(1/p-1)}.
$$
Then, it follows from Theorem~\ref{chap:ridge:sec:cube:res:ub_det} that there are constants $C_0$ and $C_1$ that are independent of $\varepsilon$ and $d$ such that
\begin{align*}
 \log n(\varepsilon, R_d^{r,(p,S)}) \leq C_0 + C_1
 \begin{cases}
 \log(d)                               &, \varepsilon_1 \leq \varepsilon \leq 1,\\
 (1/\varepsilon)^{\frac{1}{r(1/p-1)}}  &, \varepsilon < \varepsilon_1.\\ 
 \end{cases} 
\end{align*}
Put $x=1/\varepsilon+d$. Then, it follows that
\[ 
 \log n(\varepsilon, R_d^{r,(p,S)}) \leq C_0 + C_1 \log(x) x^{\frac{1}{r(1/p-1)}}
\]
and $\lim_{x \to \infty} x^{-1} \log n(\varepsilon, R_d^{r,(p,S)}) = 0$. By definition of weak tractability, the desired result follows.
\end{proof}

For completeness, let us also consider the randomized version of the algorithm described in Section~\ref{subsec:algorithm_idea}. Although the randomized version is less costly than its deterministic counterpart, the following result shows that we basically have the same upper bounds as in the deterministic setting.

\begin{theorem}\label{chap:ridge:sec:cube:res:ub_ran}
Let~$r > 1$,~$0<p\leq 1$, and~$0<S< d$.
For~$c_{p,S}$ as in Theorem~\ref{chap:ridge:sec:cube:res:ub_det} and a constant~$C_{r,p,S}>0$ independent of~$n$ and~$d$, we have
\[ 
 \ranerror(n,R_d^{r,(p,S)}) \leq C_{r,p,S}
 \begin{cases}
   1 &, 1 \leq n \leq 2d,\\
   \left(\frac{1}{\log(n)}\right)^{r(1/p-1)} &, 2d \leq n \leq c_{p,S}2^d d^{1/p-1},\\
   2^{rd} \, n^{-r} &, n \geq c_{p,S} 2^d d^{1/p-1}.
 \end{cases}
\]
\end{theorem}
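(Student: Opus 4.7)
The plan is to mirror the proof of Theorem~\ref{chap:ridge:sec:cube:res:ub_det}, replacing the uniform recovery guarantee of Theorem~\ref{chap:ridge:sec:cube:res:uni_recover} by the randomized per-instance recovery guarantee of Theorem~\ref{chap:ridge:sec:cube:res:ran_recover}. The crucial observation is that both theorems deliver the same target error $\varepsilon$, but the randomized version requires only a vertex set of size $n_v = 2^{s} \lceil \log(1/\delta) \rceil$ instead of the larger $n_v = 2^{s} \lceil s \log(d/s) + \log(1/\delta) \rceil$ used in the deterministic construction. This is a logarithmic saving in $n_v$, which, as we will see, preserves the form of the bound after taking logarithms.

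First, I would convert the probabilistic guarantee into a bound on the randomized error $(\expec[\|f - \widehat f\|_\infty^2])^{1/2}$. The output $\widehat f$ of the algorithm of Section~\ref{subsec:algorithm_idea} is bounded in $L_\infty$ by some absolute constant $M_r$ depending only on $r$: either $\widehat f$ equals $(f_{\min}+f_{\max})/2$ which is controlled by $\|f\|_\infty \le 1$, or $\widehat f(x) = \widehat g(\widehat a^\tr x)$ with $\|\widehat g\|_\infty \le c_r$ by Lemma~\ref{chap:ridge:lem:quasi-interpolation}. Since $\|f\|_\infty \le 1$ as well, $\|f - \widehat f\|_\infty \le 1 + M_r$ almost surely. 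Splitting the expectation over the success event (probability $\ge 1-\delta$, error $\le \varepsilon$) and the failure event gives
\[
 \expec\bigl[\|f - \widehat f\|_\infty^2\bigr] \le \varepsilon^2 + (1+M_r)^2 \delta,
\]
so choosing $\delta = \varepsilon^2/(1+M_r)^2$ yields a randomized error of at most $\sqrt{2}\,\varepsilon$.

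With this $\delta$, the total information cost becomes $n' = n_v n_g + n_g + n_b + d$ with $\log(1/\delta) = 2\log((1+M_r)/\varepsilon)$. I would then replay the three-case analysis of Theorem~\ref{chap:ridge:sec:cube:res:ub_det}. In the trivial range $1 \le n \le 2d$, return the constant $0$ (or any fixed constant), giving $\ranerror \le 1$. In the preasymptotic range $2d \le n \le c_{p,S} 2^d d^{1/p-1}$, one has $s < d$ and $s = S\lceil (C_r/\varepsilon)^{1/(r(1/p-1))}\rceil$; since $\log n' \lesssim s + \log\log(1/\varepsilon) + (1/r)\log(1/\varepsilon)$, the term $s$ dominates for the relevant $\varepsilon$, so $\log n \gtrsim (1/\varepsilon)^{1/(r(1/p-1))}$ and hence $\varepsilon \lesssim (1/\log n)^{r(1/p-1)}$. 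In the asymptotic range $n \ge c_{p,S} 2^d d^{1/p-1}$, one has $s=d$, $n_v \le 2^{d+1}\log(1/\delta)$, and $n' \lesssim 2^d (C_r/\varepsilon)^{1/r}\log(1/\varepsilon)$, yielding $\varepsilon \lesssim 2^{rd} n^{-r}$ after absorbing the logarithmic factor into the constant $C_{r,p,S}$.

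The main obstacle is bookkeeping: one has to check that the additional $\log(1/\delta) = O(\log(1/\varepsilon))$ factor in $n_v$ does not distort the form of the bound nor shift the transition points $2d$ and $c_{p,S} 2^d d^{1/p-1}$ by more than an absolute constant. This reduces to verifying that, throughout the preasymptotic range, the sparsity parameter $s$ indeed dominates $\log\log(1/\varepsilon)$ in $\log n'$, which follows from the lower bound $\varepsilon \ge \varepsilon_0$ forcing $s \ge \log\log d$ just as in the proof of Theorem~\ref{chap:ridge:sec:cube:res:ub_det}. Once this is checked, the same constants $c_{p,S}$ govern the transitions, and the constant $C_{r,p,S}$ only has to absorb the extra factor coming from the randomized-to-deterministic conversion.
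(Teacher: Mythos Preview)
Your plan is essentially the same as the paper's: mirror the proof of Theorem~\ref{chap:ridge:sec:cube:res:ub_det}, replace the uniform guarantee by the per-instance guarantee of Theorem~\ref{chap:ridge:sec:cube:res:ran_recover}, and convert the high-probability bound into an $L_2$-expectation bound by splitting over the success/failure events with $\delta \asymp \varepsilon^2$. The paper simply uses the almost-sure bound $\|f-\widehat f\|_\infty \le 2$ and $\delta=\varepsilon^2$, obtaining $\sqrt{5}\,\varepsilon$, but your version with $1+M_r$ and $\sqrt{2}\,\varepsilon$ is equivalent.

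There is one genuine slip in the asymptotic range. When $s=d$, the construction in Theorem~\ref{chap:ridge:sec:cube:res:ran_recover} does \emph{not} use $n_v = 2^{d}\lceil\log(1/\delta)\rceil$ random vertices; it switches to $\mathcal V=\{-1,1\}^d$, which has cardinality exactly $2^d$, and the algorithm becomes deterministic. Hence the information cost there is $2^d n_g + n_g + n_b + d$ with no $\log(1/\delta)$ factor at all, and the analysis is \emph{identical} to the deterministic case, giving directly $\varepsilon \lesssim 2^{rd}n^{-r}$. Your claim that the extra $\log(1/\varepsilon)$ factor in $n_v$ can be ``absorbed into the constant $C_{r,p,S}$'' is not correct as stated: from $n' \lesssim 2^d (1/\varepsilon)^{1/r}\log(1/\varepsilon)$ one would only get $\varepsilon \lesssim 2^{rd}n^{-r}(\log(n2^{-d}))^{r}$, and that logarithmic factor depends on $n$ and $d$. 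The fix is exactly the observation above; once you note that $s=d$ makes the algorithm deterministic, this case reduces verbatim to \eqref{chap:ridge:sec:cube:eq:ub_ran_asymp}.
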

\begin{proof}
Let~$n_v, n_b, n_b$, and~$\mathcal V$ as in Theorem~\ref{chap:ridge:sec:cube:res:ran_recover}. With these choices, the procedure introduce in Section~\ref{subsec:algorithm_idea} yields a mapping
$
 S_{n'}(f)
$
with information cost
$$n' = n'(\varepsilon) = n_v n_g + n_g + n_b + d.$$

\emph{Case~$n > c_{p,S} 2^d d^{1/p-1}$}. We find~$0<\varepsilon\leq 1$ such that~$n \leq n'$ and~$s = d$. Then~$S_{n'}$ is deterministic and the argumentation is identical to the proof of Theorem~\ref{chap:ridge:sec:cube:res:uni_recover}; by~\eqref{chap:ridge:sec:cube:eq:ub_ran_asymp}, we obtain
\[ 
 \ranerror(n,R_d^{r,(p,S)}) \leq C_{r,p,S} 2^{rd} n^{-r}.
\]

\emph{Case~$2d \leq n \leq c_{p,S} 2^d d^{1/p-1}$}. Choose~$0<\varepsilon\leq 1$ such that with the choice~$\delta = \varepsilon^2$, we have~$n \leq n'$ and~$s < d$.
By Theorem~\ref{chap:ridge:sec:cube:res:ran_recover}, we have
$$
 \mathbb{P}( \|f-S_{n'}(f)\|_\infty >\varepsilon) \leq \varepsilon^2,
$$
which leads, in combination with~$\|f-\hat S_{n'}(f)\|_\infty \leq 2$ a.s., to the estimate
\begin{align*}
 &\ranerror(n,R_d^{r,(p,S)}) \leq \sqrt{\mathbb{E}\|f - S_{n'}(f)\|_{\infty}^2}\\
 &= \sqrt{\int_{\{\|f-S_{n'}(f)\|_\infty > \varepsilon\}} \|f-S_{n'}(f)\|_\infty^2
  + \int_{\{\|f-S_{n'}(f)\|_\infty \leq \varepsilon\}} \|f-S_{n'}(f)\|_\infty^2}\\
 &\leq \sqrt{5} \varepsilon.
\end{align*}
Now, since
\begin{align*}
 n_v &\leq 2^{s+2} \log(1/\varepsilon) \leq 2^{2s+2} \leq 2 \cdot 8^{S(C_r/\varepsilon)^{\frac{1}{r(1/p-1)}}},\\
 n_g &\leq 2 (C_r/\varepsilon)^{1/r} \leq 2^{1+1/r (C_r/\varepsilon)^{\frac{1}{r(1/p-1)}}},
\end{align*}
we have
\[ 
 n'-d \leq 4n_v n_g \leq 16 \cdot 2^{(3S+1/r) (C_r/\varepsilon)^{\frac{1}{r(1/p-1)}}}. 
\]
Using the assumption~$n \geq 2d$, we find a constant~$c > 1$ such that
\[ 
 \ranerror(n,R_d^{r,(p,S)}) \leq \sqrt{5}\varepsilon \leq \sqrt{5} c C_r (3S+1/r)^{1/p-1} \left(\frac{1}{\log(n)}\right)^{r(1/p-1)}.
\]
\emph{Case~$1 \leq n \leq 4d$}. the trivial algorithm gives
$$\ranerror(n,R_d^{r,(p,S)}) \leq \sup_{f \in R^{r,p,S}([-1,1]^d)} \|f\|_\infty = 1.$$ 
\end{proof}

\begin{remark}\label{chap:ridge:sec:cube:rem:p=1}
In the case~$p=1$, all results hold still true if we replace the class~$R_d^{r,(S,1)}$ by~$R_d^{r,1}$ since we only have to consider the case~$s=d$ in Theorem~\ref{chap:ridge:sec:cube:res:no_large_L} then. We do not now whether the obtained upper bounds are optimal when~$0<p<1$.
\end{remark}

\section{Related Work}\label{sec:related_work}

There is a vast body of literature that is concerned with ridge functions. Since various mathematical communities have contributed to the research on ridge functions, we find it of value to close this work with a broader overview of related research. This overview does by no means claim to be exhaustive. 

\subsection{Further work on uniform recovery}
An alternative to the component-wise positivity of the ridge vector~\eqref{eq:known_signs} that also guarantees polynomial tractability is to assume that 
\begin{align}\label{eq:assump_vybiral}
 g'(0) > \kappa
\end{align}
for some given $\kappa > 0$.
This assumption works both for ridge functions defined on the hypercube and for ridge functions defined on the Euclidean ball
$$B_2^d = \{x\in \R^d: \|x\|_2 \leq 1\},$$
whereas~\eqref{eq:known_signs} does not lead to a polynomially tractable problem for ridge functions defined on the hypercube, see~\cite{mayer/ullrich/vybiral:ridge_sampling}. Assumption~\eqref{eq:assump_vybiral} has been studied in \cite{fornasier/schnass/vybiral:multi-ridge_functions,kolleck/vybiral:ridge_approximation,mayer/ullrich/vybiral:ridge_sampling}, where~\cite{kolleck/vybiral:ridge_approximation} studies also the effect of noisy measurements. In~\cite{mayer/ullrich/vybiral:ridge_sampling}, it has been shown that recovery of ridge functions defined on the Euclidean ball in general suffers from the curse of dimensionality. This finding is based on novel two-sided estimates that reduce the decay behavior of the worst-case recovery error to the decay behavior of entropy numbers of~$\ell_p^d$-balls. 
whereas~$0<p<2$ implies weak tractability for sufficiently large~$\alpha > 0$. 

There is an obvious generalization of the ridge function model, namely functions of the form
\[ 
 f(x) = g(Ax), \quad g: \R^m \to \R, \quad A \in \R^{m \times d},
\]
where~$m$ is supposed to be much smaller than the ambient dimension~$d$. In~\cite{pinkus:ridge_functions}, such functions are called \emph{generalized ridge functions}. Following the ideas of~\cite{buhmann/pinkus:1999:identifying_ridge}, the paper~\cite{fornasier/schnass/vybiral:multi-ridge_functions} develops an efficient algorithm for the recovery of generalized ridge functions defined on Euclidean balls, provided the function~$g$ fulfills certain integral conditions and the rows of the matrix are compressible. In the case~$m=1$, the integral conditions are fulfilled, e.g., if we assume~\eqref{eq:assump_vybiral}. 
Instead of compressibility assumptions on the rows of~$A$, the work~\cite{tyagi/cevher:2014:low-rank} assumes that the matrix~$A$ is a low-rank tensor and obtains an algorithms that requires only polynomially many function samples.

A rank-1 tensor is a multivariate function of the form $f(x_1,\dots,x_d) = \prod_{j=1}^d f(x_j)$. The recent works~\cite{bachmayr/dahmen/devore/grasedyck:2014:rank-1,novak/rudolf:2016:rank-1} study efficient methods and tractability aspects. The proof techniques show interesting resemblances to the techniques used in the context of ridge functions.

\subsection{Ridge functions in semi-parametric statistics}
\label{chap:ridge:subsec:semi-parametric_statistics}
The phenomenon ``curse of dimensionality'' 
is also known in statistics. 
In the \emph{regression problem}, 
one has stochastically independent observations 
$$(X^{(1)},Y_1),\dots,(X^{(n)},Y_n),$$ 
which are assumed to be related by
\[ 
 Y_i = f(X^{(i)}) + \epsilon_i, \quad i=1,\dots,n,
\]
where the~$\epsilon_i$ are noise terms. 
The goal is to derive from these observations a reconstruction~$\hat f$ of the unknown function 
$f$ such that the least squares 
error~$\|f-\hat f\|_2$ is small. In this context, 
curse of dimensionality refers to 
the fact that the random sampling 
points~$X^{(1)},\dots,X^{(n)}$ 
are sparsely scattered when they take values in 
high-dimensional metric spaces. 
This has the unpleasant consequence that standard nonparametric regression 
techniques such as kernel estimation, nearest-neighbor, 
and spline smoothing work poorly 
in high dimensions since they are based on local averaging. 

It has been a prominent idea in statistics to allow only specific functional dependencies in models to mitigate the burden of high-dimensionality. In this way, one seeks to
find a compromise between linear models, that scale rather well with the dimension, and fully nonparametric
models, which face the issues mentioned above in high-dimensional settings. \emph{Projection pursuit regression (PPR)}~\cite{friedman/stuetzle:project_pursuit_regression,huber:projection_pursuit}
is one possible semiparametric approach used 
since the early 1980s to face the problem of sparsely scattered data. 
The key assumption is that the unknown regression 
surface~$f$ can be approximated well by a \emph{sum of ridge functions},
i.e.
\begin{align}\label{chap:ridge:eq:ppr} 
 f(x) \approx \sum_{j=1}^m g_j(a_j^\tr x)
\end{align}
with~$a_j \in \R^d$ and univariate functions~$g_j$. This can be interpreted as a non-linear generalization of \emph{principal component analysis (PCA)}~\cite{hastie/tibshirani/friedman:2009:elements}. A widely used simplification of~\eqref{chap:ridge:eq:ppr} are \emph{additive models}~\cite{hastie/tibshirani/friedman:2009:elements,raskutti/wainwright/yu:2012:additive}, where the~$a_j$ are assumed to be coordinate directions.
  
For~$m=1$ in~\eqref{chap:ridge:eq:ppr}, a closely related semiparametric model is popular in econometrics under the name \emph{single-index model}~\cite{haerdle/mueller/sperlich/werwatz:nonparametric_and_semiparametric_models,ichimura:estimation_of_single_index_models}; it assumes that~$f$ is a ridge function,
\[ 
 f(x) = g(a^\tr x).
\]
These simple ridge-based regression models have been successfully applied to high-dimensional real-world data, for instance, to identify the variables that influence income~\cite{cui/haerdle/zhu:the_efm_approach}, the severity of side impact accidents~\cite{haerdle/hall/ichimura:optimal_smoothing}, or air pollution~\cite{friedman/stuetzle:project_pursuit_regression}. As a particular family of estimation methods, we mention \emph{average gradient estimation (ADE)}~\cite{hristache/juditsky/spokoiny:direct_estimation,powell/stock/stoker:1989:ade,stoker:1986:ade}, which also rests on the idea to exploit~$\nabla f(x) = g(a^\tr x) a$. Concerning theoretical error bounds, root-$n$-consistency for various estimation methods has been shown, assuming that~$g$ is two-times differentiable and~$\|a\|_2=1$; see, e.g.,
\cite{cui/haerdle/zhu:the_efm_approach,hall:on_projection_pursuit_regression,haerdle/hall/ichimura:optimal_smoothing,hristache/juditsky/spokoiny:direct_estimation}. If a method is root-n-consistent, then this implies
\begin{align}\label{chap:ridge:eq:root-n-consistency} 
 \expec \|f - \widehat f\|_2 = \mathcal O(n^{-1/2}),
\end{align}
where~$f$ is the unknown ridge function,~$\widehat f$ the computed estimate and~$n$ the number of samples used. There is also a work that proves asymptotically optimal minimax bounds~\cite{golubev:1992:ridge}. It seems that all the afore-mentioned results are only of asymptotic nature and
hide constants which potentially depend on the dimension~$d$. We further note that the decay rate in~\eqref{chap:ridge:eq:root-n-consistency} mainly reflects the assumptions that have been made for the noise of the samples.

\subsection{One-bit compressed sensing}

In $1$-bit compressed sensing, the aim is to recover a compressible signal $a \in \R^d$ from~$1$-bit measurements $y_i = \sign(a^\tr x_i)$, $i=1,\dots,n$, given that $\expec y_i = g(a^\tr x)$ for some unknown, univariate $g: \R \to [-1,1]$ such that
\begin{align}\label{eq:1bit}
	\expec[g(X)X] = \lambda > 0
\end{align} for a standard normal random variable $X$, see~\cite{plan/vershynin:2013:1bit} and the references there. Note that the goal here is only to recover the vector $a$ and not the non-linearity $g$. Further, note the similarity between~\eqref{eq:1bit} and the integral condition discussed in~\cite{fornasier/schnass/vybiral:multi-ridge_functions}. In particular, it is clear that~\eqref{eq:1bit} is fulfilled if $g$ is a continuous function with $g(0) > \kappa > 0$.

\subsection{Ridge functions as atoms for approximation}

The afore-mentioned PPR provides an example for approximating an unknown function \emph{by} a sum of ridge functions. The recent monograph~\cite{pinkus:ridge_functions} gives a detailed overview of what is known about approximation by sums of ridge functions. This includes, among other aspects, uniqueness of representation, density properties (i.e., what functions can be approximated by sums of ridge functions), degree of approximation, best approximation, and greedy methods. We also recommend the older work~\cite{pinkus:1997:approximating-by-ridge} by the same author, which is a well-written introduction to this topic. 

Closely related to PPR in spirit and in terms of algorithmic approaches are neural network models~\cite{anthony/bartlett:1999:nn}. For instance, in \emph{single hidden-layer feedforward networks}, the given data is fitted to a function of the form
\[ 
 f(x) = \sum_{i=1}^m \beta_i \sigma(a_i^\tr x + b_i).
\]
In contrast to projection pursuit regression, the univariate~$\sigma$, which is called \emph{activation function}, is chosen in advance.  Approximation-theoretical properties of these models and also the more general \emph{multilayer feedforward perceptrons (MLP)} have been surveyed in~\cite{pinkus:1999:mlp}. A new approximation-theoretical approach towards neural networks models---and more generally, learning based on dictionaries---has been established by~\cite{candes:1999:nn}. This work introduces an analogon to the well-known Fourier and wavelet transforms based on ridge functions, the so-called \emph{ridgelet transform}. This transform provides representations with frame properties that are particularly suited to represent functions with singularities along hypersurfaces. Further statistical properties of ridgelets have been studied in~\cite{candes:2003:estimating}. The paper~\cite{donoho:2000:orthonormal_ridgelets} constructs an orthonormal basis based on ridgelets.

\begin{appendix}

\section{Further proofs}\label{sec:proofsA}

\paragraph{Proof of Lemma~\ref{chap:ridge:lem:extrapolation_propQ2}.}

There is~$\xi_m \in [1-sh,1]$ such that~$D_{-h}^m(g,1) = g^{(m)}(\xi_m)$. Further, for~$\beta = r - s$, the derivative~$g^{(m)}$ is H\"older-continuous with
$$|g^{(m)}| \leq |a^\tr v|^m |g^{(m)}|_\beta \leq |a^\tr v|^m.$$
Hence, for all~$\xi \in [1-sh,1]$ we obtain
\begin{align*} 
 |g^{(m)}(\xi)| &
 \leq |g^{(m)}(\xi_m)|+|\xi - \xi_m|^\beta\\
 & \leq \min\{|a^\tr v|^m,2^{m-1} h^{\beta}\} + |a^\tr v|^m (mh)^\beta = (2^{m-1} + s^\beta) h^{\beta} = C_m h^\beta.
\end{align*}

Considering the derivative~$g^{(m-1)}$, there is~$\xi_{m-1} \in [1-(m-1)h,1]$ such that~$$D_{-h}^{m-1}(g,1) = g^{(m-1)}(\xi_{m-1}).$$ By the mean value theorem, there is for all~$\xi \in [1-(m-1)h,1]$ a
$$\xi_m' \in [1-(m-1)h,1]$$
such that
\begin{align*} 
 g^{(m-1)}(\xi) = g^{(m-1)}(\xi_{m-1}) + g^{(m)}(\xi_m')(\xi - \xi_{m-1}).
\end{align*}
By assumption and the previous considerations we conclude
\[
|g^{(m-1)}(\xi)| \leq 2^{m-2}h^{\beta+1} + C_m (m-1) h^{\beta+1} = C_{m-1}h^{\beta+1},
\]
where~$C_{m-1} = 2^{m-2} + 2^{m-1}(m-1) + s^\beta(s-1)$.

Iteratively repeating this argument for the remaining derivatives, we obtain
\[ 
 |g^{(i)}(\xi)| \leq C_i h^{r-i}, \quad \xi \in [1-ih,1],
\]
with~$C_i = \sum_{j=i}^m 2^{j-1} \prod_{l=i}^{j-1} l + \prod_{l=i}^{s-1} s^\beta$. It is easy to see that~$C_i \leq 2^k s!$.

\paragraph{Proof of Lemma~\ref{lem: RECA}.}
We can assume that~$a^\tr v \not=0$, otherwise the profile segment~$g_v$ given by~\eqref{eq:defgv} is constant on~$[-1,1]$ and consequently, there is nothing to prove. Let~$[t_{\rm mid}-\delta, t_{\rm mid}+\delta]$ be the refined interval computed in  Step 2 using~$n_b$ samples. Recall that~$\widehat a = \widetilde{a}/\|\widetilde{a}\|_1$, where the~$i$th coordinate of~$\widetilde{a}$ is given by~\eqref{eq:approx_a_i} for~$i=1,\dots,d$. By the fact that
$$\sign(\;\norm{a}_1^{-1}/(v^\tr a)) = \sign(1/(v^\tr a))$$
and~\cite[Lemma~3.1]{kolleck/vybiral:ridge_approximation}, we have
\begin{equation} \label{eq: KV14_lemma}
 \| \sign(1/(v^\tr a))\, \widehat a - a/\|a\|_1 \|_1 
 \leq 2\; \frac{\|\widetilde a - a/(a^\tr v)\|_1}{\|\widetilde a\|_1}.
\end{equation}

Let us prove an upper bound for the right-hand side in~\eqref{eq: KV14_lemma}. Extending the definition in~\eqref{eq:defgv}, let
\[ 
 g_v: [-|a^\tr v|^{-1}, |a^\tr v|^{-1}] \to \R, \quad t \mapsto g(t a^\tr v)
\]
denote the stretched profile of which Step 3 observed function values. By the mean value theorem, there is a real number~$\xi_0$ satisfying
$$|\xi_0 - t_{\rm mid}| \leq \delta,$$
and real numbers~$\xi_i$ for~$i\in \{1,\dots,d\}$ satisfying~$|\xi_i - t_{\rm mid}| \leq \delta |a_i|/|a^\tr v|$ such that 
\begin{align*}
 g_v'(\xi_0 ) & = \frac{g_v(t_{\rm mid} +\delta)-g_v(t_{\rm mid}-\delta)}{2\delta}, \\
 g_v'(\xi_i) & = \frac{g_v(t_{\rm mid}+\frac{\delta a_i }{a^\tr v})-g_v(t_{\rm mid})}{\delta}^\tr \frac{a^\tr v}{a_i}.
\end{align*}
This implies
\[
 \widetilde a_i = \frac{a_i}{a^\tr v}^\tr \frac{g'_v(\xi_i)}{g'_v(\xi_0)} 
 = \frac{a_i}{a^\tr v} \left(1 + \frac{g_v'(\xi_i) -  g_v'(\xi_0)}{g_v'(\xi_0)}\right).
\]
Hence
\[ 
 \frac{\|\widetilde a - a/(v^\tr a)\|_1}{\|\widetilde a\|_1} 
 = \frac{\sum_{i=1}^d |g_v'(\xi_i) - g_v'(\xi_0)| |a_i|}{\sum_{i=1}^d |g_v'(\xi_i)| |a_i|}. 
\]
Now, since~$g_v'$ is H\"older continuous on~$[-|a^\tr v|^{-1},|a^\tr v|^{-1}]$ with exponent~$\rho$, we obtain by~\eqref{chap:fs:eq:hoelder_defi} that 
\begin{align*}
 |g_v'(\xi_i) - g_v'(\xi_0)| &\leq 2 |g_v'|_{\rho} \min\{1,| \xi_i - \xi_0|\}^{\rho}\\
 &\leq 2 \|g\|_{\Lip(r)} |a^\tr v| \left(| \xi_i - t_{\rm mid}|^{\rho} + |\xi_0 -  t_{\rm mid}|^{\rho} \right)\\
 &\leq 2 \delta^{\rho}  \|g\|_{\Lip(r)} \left( |a_i|^{\rho} |a^\tr v|^{1-\rho} + |a^\tr v| \right).
\end{align*}
By~$|v^\tr a| \leq \|v\|_{\infty} \|a\|_1$ and~$|a_i| \leq \|a\|_1$, it
follows that
\[ 
|g_v'(\xi_i) - g_v'(\xi_0)| \leq 4 \|g\|_{\Lip(r)} \|a\|_1 \delta^{\beta} \leq 4 \delta^{\rho}.
\]
Using~$|g_v'(\xi_i)| \geq |g_v'(\xi_0)| - |g_v'(\xi_i) - g_v'(\xi_0)| \geq L - 4 \delta^\rho$, 
we obtain
\[ 
 \frac{\|\widetilde a - a/v^\tr a\|_1}{\|\widetilde a\|_1} 
 \leq \frac{ \delta^{\rho}}{L/4 - \delta^{\rho}}.
\]
The choice of~$n_b$ guarantees that
\begin{equation} \label{eq: bisection}
\delta = 2^{-n_b} \abs{I_0} = 2^{-n_b}/n_g\leq \left(\frac{L\epsilon}{4(6+\varepsilon)}\right)^{1/\rho}.
\end{equation}
which in turn yields
\[ 
 \frac{ \delta^{\rho}}{L/4 - \delta^{\rho}} \leq \varepsilon/3.
\]
This proves the statement of this lemma.

\paragraph{Proof of Theorem~\ref{chap:ridge:sec:cube:res:large_L}.}
Let~$\gamma := \sign(v^\tr a)$. Recall that
$$f(x) = g(a^\tr x) = g_{\sign(a)}(\bar a^\tr x),$$
where~$\bar a = a/\|a\|_1$. Let~$Q_h$ denote a quasi-interpolant as introduced in Section~\ref{sec:def}. For any~$x \in [-1,1]^d$, the approximation error can be decomposed into three components,
\begin{align*}
 |\widehat f(x) - f(x)| & = 
 |( Q_h g_{\sign(\widehat{a})})( \widehat{a}^\tr x)
 - g_{\sign(a)}(\bar a^\tr x)|\\
 &\leq
 | ( Q_h g_{\sign(\widehat{a})})( \widehat{a}^\tr x) - g_{\sign(\widehat a)}(\widehat a^\tr x)|\\ 
 &\qquad\qquad+ |g_{\sign(\widehat a)}(\widehat a^\tr x) - g_{\sign(a)}(\gamma \widehat a^\tr x)| \\
 &\qquad\qquad+ |g_{\sign(a)}(\gamma \widehat a^\tr x) - g_{\sign(a)}(\bar a^\tr x)|.
\end{align*}
The first part is because we can only approximate~$g_{\sign(\widehat a)}$, 
the second component is due to the uncertainty regarding the orthant 
(the signs of the ridge vector), and the third one is due to the uncertainty regarding the ridge vector.
By Lemma~\ref{chap:ridge:lem:quasi-interpolation}, the choice of~$n_g$ gives
\[
 | ( Q_h g_{\sign(\widehat{a})})( \widehat{a}^\tr x) - g_{\sign(\widehat a)}(\widehat a^\tr x)| 
 \leq \|Q_h g_{\sign(\widehat{a})} - g_{\sign(\widehat a)}\|_{\infty} \leq c_r n_g^{-r} \leq \varepsilon/3.
\]
To treat the second term we need some preliminary calculations. Namely, as in
\cite[Eq. (3.10)]{kolleck/vybiral:ridge_approximation} we have
\begin{align*}
  \abs{\bar a^\tr (\sign(\gamma \widehat a)-\sign(\bar a))} 
= & \abs{\,\norm{\bar a}_1 - \norm{\widehat{a}}_1 - (\bar a-\gamma \widehat a )^\tr (\sign(\gamma \widehat a))} \\
\leq & \norm{\bar a-\gamma \widehat a}_1 \norm{\sign(\gamma \widehat a)}_\infty \leq \norm{\bar a-\gamma \widehat a}_1,
\end{align*} 
since~$\|\widehat a\|_1 = \|\bar a\|_1 = 1$. 
Then, for the second term we obtain
\begin{align*} 
   |g_{\sign(\widehat a)}(\widehat a^\tr x) - g_{\sign(a)}(\gamma\widehat a^\tr x)|  
 &= |g\left( ( a^\tr \sign(\widehat a) )\,(  \widehat a^\tr x ) \right) 
	  - g\left( \|a\|_1 ( \gamma \widehat a^\tr x )\right)|\\
 &\leq \|g\|_{\Lip(r)}\, |( \gamma \widehat a^\tr x ) |\,|  a^\tr (\sign(\gamma \widehat a) - \sign(a)) |\\
 &\leq  \|a\|_1 \big\|\gamma \widehat a - a/\|a\|_1 \big\|_1\\
 &\leq \big\|\gamma \widehat a - a/\|a\|_1\big\|_1
\end{align*}
and for the third term we have
\begin{align*}
     |g_{\sign(\gamma a)}(\widehat a^\tr x) - g(a^\tr x)|
 & = |g(\gamma \|a\|_1 \widehat a^\tr x) -g(a^\tr x)| \\
 & \leq  \|g\|_{\Lip(r)} \|a\|_1 \big\|\gamma \widehat a - a/\|a\|_1\big\|_1\\ 
 & \leq \big\|\gamma \widehat a - a/\|a\|_1\big\|_1.
\end{align*}
By Lemma~\ref{lem: RECA}, we have~$\big\|\gamma \widehat a - a/\|a\|_1\big\|_1 \leq \varepsilon/3$, which proves the statement.

\end{appendix}

\bibliographystyle{plain}
\bibliography{literature}

\end{document}